\def\Ddots{\mathinner{\mkern1mu\raise\p@
		\vbox{\kern7\p@\hbox{.}}\mkern2mu
		\raise4\p@\hbox{.}\mkern2mu\raise7\p@\hbox{.}\mkern1mu}}
\def\XXint#1#2#3{{\setbox0=\hbox{$#1{#2#3}{\int}$}
		\vcenter{\hbox{$#2#3$}}\kern-.5\wd0}}
\newtheorem*{thA}{Theorem A}
\newtheorem*{thA.1}{Theorem A.1}
\newtheorem*{thA.2}{Theorem A.2}
\newtheorem*{thB}{Theorem B}
\newtheorem*{thB.1}{Theorem B.1}
\newtheorem*{thB.2}{Theorem B.2}
\def\XXint#1#2#3{{\setbox0=\hbox{$#1{#2#3}{\int}$}
		\vcenter{\hbox{$#2#3$}}\kern-.5\wd0}}
\begin{document}
	\newtheorem{theorem}{Theorem}
	\newtheorem{proposition}[theorem]{Proposition}
	\newtheorem{conjecture}[theorem]{Conjecture}
	\def\theconjecture{\unskip}
	\newtheorem{corollary}[theorem]{Corollary}
	\newtheorem{lemma}[theorem]{Lemma}
	\newtheorem{claim}[theorem]{Claim}
	\newtheorem{sublemma}[theorem]{Sublemma}
	\newtheorem{observation}[theorem]{Observation}
	\theoremstyle{definition}
	\newtheorem{definition}{Definition}
	\newtheorem{notation}[definition]{Notation}
	\newtheorem{remark}[definition]{Remark}
	\newtheorem{question}[definition]{Question}
	\newtheorem{questions}[definition]{Questions}
	\newtheorem{example}[definition]{Example}
	\newtheorem{problem}[definition]{Problem}
	\newtheorem{exercise}[definition]{Exercise}
	\newtheorem{thm}{Theorem}
	\newtheorem{cor}[thm]{Corollary}
	\newtheorem{lem}{Lemma}[section]
	\newtheorem{prop}[thm]{Proposition}
	\theoremstyle{definition}
	\newtheorem{dfn}[thm]{Definition}
	\theoremstyle{remark}
	\newtheorem{rem}{Remark}
	\newtheorem{ex}{Example}
	\numberwithin{equation}{section}
	\def\C{\mathbb{C}}
	\def\R{\mathbb{R}}
	\def\Rn{{\mathbb{R}^n}}
	\def\Rns{{\mathbb{R}^{n+1}}}
	\def\Sn{{{S}^{n-1}}}
	\def\M{\mathbb{M}}
	\def\N{\mathbb{N}}
	\def\Q{{\mathbb{Q}}}
	\def\Z{\mathbb{Z}}
	\def\F{\mathcal{F}}
	\def\L{\mathcal{L}}
	\def\S{\mathcal{S}}
	\def\supp{\operatorname{supp}}
	\def\essi{\operatornamewithlimits{ess\,inf}}
	\def\esss{\operatornamewithlimits{ess\,sup}}
	
	\numberwithin{equation}{section}
	\numberwithin{thm}{section}
	\numberwithin{theorem}{section}
	\numberwithin{definition}{section}
	\numberwithin{equation}{section}
	
	\def\earrow{{\mathbf e}}
	\def\rarrow{{\mathbf r}}
	\def\uarrow{{\mathbf u}}
	\def\varrow{{\mathbf V}}
	\def\tpar{T_{\rm par}}
	\def\apar{A_{\rm par}}
	
	\def\reals{{\mathbb R}}
	\def\torus{{\mathbb T}}
	\def\scriptm{{\mathcal T}}
	\def\heis{{\mathbb H}}
	\def\integers{{\mathbb Z}}
	\def\z{{\mathbb Z}}
	\def\naturals{{\mathbb N}}
	\def\complex{{\mathbb C}\/}
	\def\distance{\operatorname{distance}\,}
	\def\support{\operatorname{support}\,}
	\def\dist{\operatorname{dist}\,}
	\def\Span{\operatorname{span}\,}
	\def\degree{\operatorname{degree}\,}
	\def\kernel{\operatorname{kernel}\,}
	\def\dim{\operatorname{dim}\,}
	\def\codim{\operatorname{codim}}
	\def\trace{\operatorname{trace\,}}
	\def\Span{\operatorname{span}\,}
	\def\dimension{\operatorname{dimension}\,}
	\def\codimension{\operatorname{codimension}\,}
	\def\nullspace{\scriptk}
	\def\kernel{\operatorname{Ker}}
	\def\ZZ{ {\mathbb Z} }
	\def\p{\partial}
	\def\rp{{ ^{-1} }}
	\def\Re{\operatorname{Re\,} }
	\def\Im{\operatorname{Im\,} }
	\def\ov{\overline}
	\def\eps{\varepsilon}
	\def\lt{L^2}
	\def\diver{\operatorname{div}}
	\def\curl{\operatorname{curl}}
	\def\etta{\eta}
	\newcommand{\norm}[1]{ \|  #1 \|}
	\def\expect{\mathbb E}
	\def\bull{$\bullet$\ }
	
	\def\blue{\color{blue}}
	\def\red{\color{red}}
	
	\def\xone{x_1}
	\def\xtwo{x_2}
	\def\xq{x_2+x_1^2}
	\newcommand{\abr}[1]{ \langle  #1 \rangle}

	\newcommand{\Norm}[1]{ \left\|  #1 \right\| }
	\newcommand{\set}[1]{ \left\{ #1 \right\} }
	\newcommand{\ifou}{\raisebox{-1ex}{$\check{}$}}
	\def\one{\mathbf 1}
	\def\whole{\mathbf V}
	\newcommand{\modulo}[2]{[#1]_{#2}}
	\def \essinf{\mathop{\rm essinf}}
	\def\scriptf{{\mathcal F}}
	\def\scriptg{{\mathcal G}}
	\def\scriptm{{\mathcal M}}
	\def\scriptb{{\mathcal B}}
	\def\scriptc{{\mathcal C}}
	\def\scriptt{{\mathcal T}}
	\def\scripti{{\mathcal I}}
	\def\scripte{{\mathcal E}}
	\def\scriptv{{\mathcal V}}
	\def\scriptw{{\mathcal W}}
	\def\scriptu{{\mathcal U}}
	\def\scriptS{{\mathcal S}}
	\def\scripta{{\mathcal A}}
	\def\scriptr{{\mathcal R}}
	\def\scripto{{\mathcal O}}
	\def\scripth{{\mathcal H}}
	\def\scriptd{{\mathcal D}}
	\def\scriptl{{\mathcal L}}
	\def\scriptn{{\mathcal N}}
	\def\scriptp{{\mathcal P}}
	\def\scriptk{{\mathcal K}}
	\def\frakv{{\mathfrak V}}
	\def\C{\mathbb{C}}
	\def\D{\mathcal{D}}
	\def\R{\mathbb{R}}
	\def\Rn{{\mathbb{R}^n}}
	\def\rn{{\mathbb{R}^n}}
	\def\Rm{{\mathbb{R}^{2n}}}
	\def\r2n{{\mathbb{R}^{2n}}}
	\def\Sn{{{S}^{n-1}}}
	\def\M{\mathbb{M}}
	\def\N{\mathbb{N}}
	\def\Q{{\mathcal{Q}}}
	\def\Z{\mathbb{Z}}
	\def\F{\mathcal{F}}
	\def\L{\mathcal{L}}
	\def\G{\mathscr{G}}
	\def\ch{\operatorname{ch}}
	\def\supp{\operatorname{supp}}
	\def\dist{\operatorname{dist}}
	\def\essi{\operatornamewithlimits{ess\,inf}}
	\def\esss{\operatornamewithlimits{ess\,sup}}
	\def\dis{\displaystyle}
	\def\dsum{\displaystyle\sum}
	\def\dint{\displaystyle\int}
	\def\dfrac{\displaystyle\frac}
	\def\dsup{\displaystyle\sup}
	\def\dlim{\displaystyle\lim}
	\def\bom{\Omega}
	\def\om{\omega}

	\author[B. Dan]{BinWei Dan}
	\address{BinWei Dan:
		School of Mathematical Sciences \\
		Beijing Normal University \\
		Laboratory of Mathematics and Complex Systems \\
		Ministry of Education \\
		Beijing 100875 \\
		People's Republic of China}
	\email{bwdan@mail.bnu.edu.cn}
	
	\author[Q. Xue]{Qingying Xue$^{*}$}
	\address{Qingying Xue:
		School of Mathematical Sciences \\
		Beijing Normal University \\
		Laboratory of Mathematics and Complex Systems \\
		Ministry of Education \\
		Beijing 100875 \\
		People's Republic of China}
	\email{qyxue@bnu.edu.cn}
	
	\keywords{Sparse domination. Rough multilinear singular integrals. $A_{\bf{p,r}}$ weights.\\
		\indent{{\it {2020 Mathematics Subject Classification.}}} Primary 42B20, Secondary 42B35.}
	
	\thanks{The authors were partly supported by the National Key R\&D Program of China (No. 2020YFA0712900) and NNSF of China (No. 12271041).
		\thanks{$^{*}$ Corresponding author, e-mail address: qyxue@bnu.edu.cn}}

	\date{\today}
	\title[ SPARSE DOMINATION FOR ROUGH MULTILINEAR SINGULAR INTEGRAL OPERATORS]
	{\bf Sparse Domination for Rough Multilinear Singular Integrals}

	\begin{abstract}

		Let \( \Omega \) be a function on \( \mathbb{R}^{mn} \), homogeneous of degree zero, and satisfy a cancellation condition on the unit sphere \( \mathbb{S}^{mn-1} \). In this paper, we show that the multilinear singular integral operator  
		\[
		\mathcal{T}_{\Omega}(f_1, \ldots, f_m)(x) := \mathrm{p.v.} \int_{\mathbb{R}^{mn}} \frac{\Omega(x - y_1, \ldots, x - y_m)}{|x - \vec{y}|^{mn}} \prod_{i=1}^m f_i(y_i) \, d\vec{y},
		\]
		associated with a rough kernel \( \Omega \in L^r(\mathbb{S}^{mn-1}) \), \( r > 1 \), admits a {sparse domination}, where $\quad \vec{y}=(y_1,\ldots,y_m)$ and $ d\vec{y}=dy_1\cdots dy_m$. As a consequence, we derive some {quantitative  weighted norm inequalities} for \( \mathcal{T}_{\Omega} \).

	\end{abstract}\maketitle
	
	\section{Introduction}

	Singular integral operators of convolution type
	\[
	Tf(x) := \mathrm{p.v.} \int_{\mathbb{R}^n} f(y) K(x - y) \, dy
	\]
	have long served as central objects of Harmonic analysis, originating from the celebrated work of Calder\'on and Zygmund~\cite{calderon_existence_1952}. For the class of homogeneous kernels of the form \( K(x) = \Omega(x/|x|) |x|^{-n} \), including the Hilbert and Riesz transforms, boundedness on \( L^p(\mathbb{R}^n) \) for \( 1 < p < \infty \) holds even under minimal smoothness assumptions on \( \Omega \). Initial results~\cite{calderon_singular_1956} required that \( \Omega \in L \log L(\mathbb{S}^{n-1}) \) and satisfy a cancellation condition on the unit sphere. Subsequent refinements~\cite{coifman_extensions_1977, connett_singular_1979} incorporated Hardy space techniques, while endpoint bounds were later developed in~\cite{christ_weak_1988, hofmann_weighted_1988, seeger_singular_1996}.
	
	Recent efforts have extended these ideas to the multilinear setting. Given \( m \in \mathbb{N} \) and a function \( \Omega \in L^q(\mathbb{S}^{mn-1}) \) with \( \int_{\mathbb{S}^{mn-1}} \Omega = 0 \), define the kernel
	\[
	K(x) := \Omega(x') |x|^{-mn}, \qquad x \in \mathbb{R}^{mn} \setminus \{0\}, \quad x' := \frac{x}{|x|},
	\]
	and consider the multilinear singular integral operator
\begin {equation} \label{1.1}
\mathcal{T}_{\Omega}(f_1, \ldots, f_m)(x) := \mathrm{p.v.} \int_{\mathbb{R}^{mn}} K(x - y_1, \ldots, x - y_m) \prod_{i=1}^m f_i(y_i) \, d\vec{y}, \quad d\vec{y}=dy_1 \cdots dy_m.
\end{equation}
	It was well known that the bilinear case \( m=2 \) was studied by Coifman and Meyer~\cite{coifman_commutators_1975}. More recently, Grafakos, He, and Honzík~\cite{grafakos_rough_2018} established the $L^{p_1}(\mathbb{R}^{n})\times L^{p_2}(\mathbb{R}^{n})$ to $L^p(\mathbb{R}^{n})$ boundedness of \( \mathcal{T}_{\Omega}\) when \( \Omega \in L^\infty(\mathbb{S}^{2n-1}) \), \(1/p_1+1/p_2=1/p\) and \( 1 < p_1, p_2 < \infty \). Their later work~\cite{grafakos_l2times_2020} lowered the regularity threshold to \( \Omega \in L^q(\mathbb{S}^{2n-1}) \) for \( q > 4/3 \), further refined by~\cite{he_improved_2023}. Most recently, Dosidis and Slavíková~\cite{dosidis_multilinear_2024} extended the above results to the entire range \( q > 1 \) for general multilinear operators $\mathcal{T}_\Omega$.
	
		To state the results in ~\cite{dosidis_multilinear_2024}, we recall some useful notations. For \( \alpha \in \{0,1\}^m \), let \( |\alpha| := \sum_{i=1}^m \alpha_i \). Define \( 1/p_\alpha := \sum_{i=1}^m \alpha_i/p_i \), and in particular \( 1/p := 1/p_{(1,\ldots,1)} \). Given \( r > 1 \), we write \( (\frac{1}{p_1},\cdots,\frac{1}{p_m}) \in \mathcal{H}^m(r) \) if
$\frac{1}{p_\alpha} + \frac{|\alpha| - 1}{r} < |\alpha|$ for all $\alpha \in \{0,1\}^m.$ The main result in \cite{dosidis_multilinear_2024} can be formulated as follows:
	\begin{thA}[\cite{dosidis_multilinear_2024}]\label{A}
	Let $1<p_1,\cdots,p_m<\infty$, $\frac{1}{p}=\sum^{m}_{i=1}\frac{1}{p_i}$ be such that $(\frac{1}{p_1},\cdots,\frac{1}{p_m})\in\mathcal{H}^m(r)$ and suppose that $\Omega\in L^r(\mathbb{S}^{mn-1})$, $r>1$, with $\int_{\mathbb{S}^{mn-1}}\Omega(\theta)d\sigma(\theta)=0$. Then there exists a constant $C=C(n,p_1,\cdots,p_m,r)$ such that
	$$\Vert \mathcal{T}_{\Omega}(f_1,\cdots,f_m)\Vert_{L^{p}(\mathbb{R}^{n})}\lesssim_{n,p_1,p_2,\cdots,p_m,r} \Vert\Omega\Vert_{L^r(\mathbb{S}^{mn-1})}\prod^{m}_{i=1}\Vert f_i\Vert_{L^{p_i}(\mathbb{R}^n)}.$$
\end{thA}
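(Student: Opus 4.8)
The plan is to establish Theorem A by the Fourier-analytic machinery for rough convolution singular integrals, in its multilinear incarnation: a dyadic decomposition of the kernel in the radial variable, followed by a Littlewood--Paley decomposition in the frequency of the vector $\vec\xi=(\xi_1,\dots,\xi_m)$, reducing matters to a geometrically convergent double sum over elementary operators. Writing $\vec z=(z_1,\dots,z_m)\in\mathbb R^{mn}$, decompose $K=\sum_{j\in\mathbb Z}K_j$ with $K_j=K\,\mathbf 1_{\{2^j\le|\vec z|<2^{j+1}\}}$; integration in polar coordinates gives the crude normalization $\|K_j\|_{L^1(\mathbb R^{mn})}\lesssim\|\Omega\|_{L^1(\mathbb S^{mn-1})}$, uniformly in $j$. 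Let $T_j$ be the $m$-linear operator with kernel $K_j(x-y_1,\dots,x-y_m)$, so $\mathcal T_\Omega=\sum_jT_j$. Fix a Littlewood--Paley family on $\mathbb R^{mn}$, $\widehat\psi$ supported in $\{|\vec\xi|\sim1\}$ with $\sum_{s\in\mathbb Z}\widehat\psi(2^{-s}\vec\xi)=1$ for $\vec\xi\ne0$, and split $T_j=\sum_{s\in\mathbb Z}T_j^s$, where $T_j^s$ carries the multiplier $\widehat{K_j}(\vec\xi)\,\widehat\psi(2^{j-s}\vec\xi)$, supported in $\{|\vec\xi|\sim2^{s-j}\}$. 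The pieces with $s\le0$ are summed at once using the cancellation $\int_{\mathbb S^{mn-1}}\Omega\,d\sigma=0$, which forces $|\widehat{K_j}(\vec\xi)|\lesssim2^j|\vec\xi|\,\|\Omega\|_{L^1}$ on $\{|\vec\xi|\lesssim2^{-j}\}$ and hence a bound $\lesssim2^{s}\|\Omega\|_{L^1}$ there; so the real task is the range $s\ge1$.

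For $s\ge1$ I would prove two families of bounds on $T_j^s$ and interpolate. First, a crude estimate that is uniform in $j$ with at most polynomial growth in $s$: Young's and Hölder's inequalities together with $\|K_j^s\|_{L^1}\lesssim\|K_j\|_{L^1}\|\psi\|_{L^1}\lesssim\|\Omega\|_{L^1}$ yield $\|T_j^s\|_{L^{q_1}\times\cdots\times L^{q_m}\to L^q}\lesssim2^{As}\|\Omega\|_{L^1(\mathbb S^{mn-1})}$ at the corner exponents ($q_i=1$ and the rest $\infty$, and cyclic permutations), for some fixed $A=A(m,n)$. Second --- and this is the heart of the matter --- an $L^2$-based estimate with exponential \emph{gain},
\begin{equation*}
\Big\|\sum_jT_j^s\Big\|_{L^2(\mathbb R^n)\times\cdots\times L^2(\mathbb R^n)\to L^{2/m}(\mathbb R^n)}\lesssim2^{-\delta s}\,\|\Omega\|_{L^r(\mathbb S^{mn-1})},\qquad\delta=\delta(r)>0.
\end{equation*}
When $r\ge2$ one has $\Omega\in L^2(\mathbb S^{mn-1})$ and the gain is extracted from the $L^2$ Fourier extension estimate for the sphere, $\int_{\mathbb S^{mn-1}}|\widehat{\Omega\,d\sigma}(R\theta)|^2\,d\sigma(\theta)\lesssim R^{-(mn-1)}\|\Omega\|_{L^2}^2$, combined with the near-orthogonality of the frequency-disjoint pieces $\{T_j^s\}_j$; this is the multilinear counterpart of the Duoandikoetxea--Rubio de Francia argument. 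When $1<r<2$ one first truncates the symbol, $\Omega=\Omega\,\mathbf 1_{\{|\Omega|\le\lambda\}}+\Omega\,\mathbf 1_{\{|\Omega|>\lambda\}}$, applies the case just treated (via $L^\infty(\mathbb S^{mn-1})\subset L^2(\mathbb S^{mn-1})$) to the first summand, of $L^\infty$-norm $\le\lambda$, and the crude estimate to the second, of $L^1$-norm $\le\lambda^{1-r}\|\Omega\|_{L^r}^r$, and optimizes in $\lambda$; balancing the powers of $\lambda$ and $2^s$ produced this way, simultaneously over all $\alpha\in\{0,1\}^m$, is precisely what yields the strict inequalities $\tfrac1{p_\alpha}+\tfrac{|\alpha|-1}{r}<|\alpha|$ cutting out $\mathcal H^m(r)$. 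Multilinear complex interpolation between the two families then gives, for every $(\tfrac1{p_1},\dots,\tfrac1{p_m})\in\mathcal H^m(r)$,
\begin{equation*}
\Big\|\sum_jT_j^s\Big\|_{L^{p_1}\times\cdots\times L^{p_m}\to L^p}\lesssim2^{\beta s}\,\|\Omega\|_{L^r(\mathbb S^{mn-1})},\qquad\beta=\beta(p_1,\dots,p_m,r)<0,
\end{equation*}
and summing the geometric series over $s\ge1$ (together with the easy $s\le0$ part) completes the proof.

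I expect the $L^2$-with-gain estimate to be the genuine obstacle, for two reasons. First, once $m\ge3$ the target $L^{2/m}$ is not even normed, so the usual Plancherel/Cotlar--Stein orthogonality cannot be applied to the output; instead one must pass to the $L^2\times\cdots\times L^2$ inputs, split according to which $f_i$ carries the dominant frequency (since $|\vec\xi|\sim2^{s-j}$ forces $\max_i|\xi_i|\gtrsim2^{s-j}$ but not which index $i$), and recover orthogonality in $j$ on the side of that input. Second, for $r$ close to $1$ the truncation level $\lambda$ must be tuned as a precise power of $2^s$, and one must verify that discarding the cancellation of $\Omega$ by truncating is harmless --- which is why the splitting is carried out inside the pieces $T_j^s$ with $s\ge1$, where cancellation has already played its role and is no longer needed. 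Alternatively, and this is the route developed in the present paper, Theorem A also follows by specializing to Lebesgue measure the weighted estimates implied by the sparse domination established below, whose proof rests on these same elementary kernel bounds.
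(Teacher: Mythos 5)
Theorem A is not proved in this paper at all: it is quoted verbatim from Dosidis--Slavíková \cite{dosidis_multilinear_2024} and used as a black box (both to verify $C_{\mathcal T}<\infty$ in the proof of Theorem \ref{th1.1} and, through Lemma \ref{multi}, to supply the exponential decay in $j$). Your overall scheme --- dyadic decomposition of the kernel in the radial variable, Littlewood--Paley decomposition in $\vec\xi$, crude endpoint bounds with controlled growth in $s$, an $L^2\times\cdots\times L^2\to L^{2/m}$ bound with exponential gain, a truncation of $\Omega$ at height $\lambda$ to pass from $r\ge 2$ to $1<r<2$, and multilinear interpolation producing exactly the constraints defining $\mathcal H^m(r)$ --- is indeed the same family of techniques as in the cited source and its predecessors \cite{grafakos_rough_2018, grafakos_l2times_2020}. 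In that sense the route is right.

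However, as a proof it has a genuine gap precisely where you locate ``the heart of the matter.'' The estimate $\bigl\|\sum_j T_j^s\bigr\|_{L^2\times\cdots\times L^2\to L^{2/m}}\lesssim 2^{-\delta s}\|\Omega\|_{L^r}$ is the entire difficulty of the theorem, and your sketch of it --- spherical extension estimate plus ``near-orthogonality of the frequency-disjoint pieces,'' with a split according to which input carries the dominant frequency --- does not constitute an argument. In the linear case orthogonality in $j$ can be run on the output side via Plancherel (Duoandikoetxea--Rubio de Francia), but here the output lands in $L^{2/m}$, which is not $L^2$ and for $m\ge 3$ not even normed; recovering decay is exactly why Grafakos--He--Honzík had to introduce a product-type wavelet decomposition of the symbol and count wavelet interactions, and why the extension to all $r>1$ and all $m$ in \cite{dosidis_multilinear_2024} required further refinements rather than a routine adaptation. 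You flag the issue but do not resolve it, so the key lemma is asserted, not proved; similarly, the claim that optimizing in $\lambda$ ``simultaneously over all $\alpha\in\{0,1\}^m$'' yields precisely $\mathcal H^m(r)$ is plausible but unverified. Finally, your closing alternative --- deducing Theorem A from the sparse domination ``established below'' --- is circular within this paper: the proof of Theorem \ref{th1.1} invokes Theorem A (and Lemma \ref{multi} from the same reference), so the sparse bound cannot be used to furnish Theorem A.
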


Consider now the sparse domination theory for $T_\Omega$. 	It was well known that sparse domination plays an important role in Harmonic analysis, particularly in deriving sharp weighted inequalities (e.g., the resolution of the \( A_2 \) conjecture~\cite{lacey_elementary_2017, lerner_simple_2013}). Recall that, given a constant \( 0 < \eta < 1 \), a collection \( \mathcal{S} \) of cubes in \( \mathbb{R}^n \) is called \( \eta \)-sparse if for each \( Q \in \mathcal{S} \) there exists a measurable subset \( E_Q \subset Q \) with \( |E_Q| \geq \eta |Q| \), and the sets \( \{E_Q\}_{Q \in \mathcal{S}} \) are pairwise disjoint.
	Given a sparse family \( \mathcal{S} \), define the positive sparse form
	\[
	\mathrm{PSF}_{\mathcal{S}; p_1, \ldots, p_{m+1}}(f_1, \ldots, f_{m+1}) := \sum_{Q \in \mathcal{S}} |Q| \prod_{i=1}^{m+1} \langle f_i \rangle_{p_i, Q}, \quad \langle f \rangle_{p,Q} := |Q|^{-1/p} \|f \chi_Q\|_{L^p}.
	\]
	Such expressions dominate quantities $|\langle T(f_1), f_2\rangle|$ for operators \( T \). This type of domination is called \emph{sparse} and plays an important role in Harmonic analysis. For instance, it was used in the proof of the $A_2$ conjecture \cite{lacey_elementary_2017, lerner_simple_2013}. Earlier works related to sparse domination can be found in \cite{hytonen_quantitative_2017, lacey_elementary_2017, lerner_pointwise_2016, li_sparse_2018, volberg_sparse_2018} and the references therein.
	In 2017, Conde-Alonso et al. \cite{conde-alonso_sparse_2017} obtained the following sparse domination for $T$.
	\[
	|(T(f_{1}), f_{2})| \leq \frac{Cp}{p-1} \sup_{S} PSF_{S;1,p}(f_{1}, f_{2}) \left\{ 
	\begin{array}{ll} 
		\|\Omega\|_{L^{r,1}\log L(S^{d-1})}, & 1 < r < \infty, p \geq r'; \\ 
		\|\Omega\|_{L^{\infty}(S^{d-1})}, & 1 < p < \infty. 
	\end{array} 
	\right.
	\]
	As a consequence, they \cite{conde-alonso_sparse_2017} deduced a new sharp quantitative $A_{p}$-weighted estimate for $\mathcal{T}_{\Omega}$. Subsequently, Di Plinio et al. \cite{di_plinio_sparse_2020} provided a sparse bound for the associated maximal truncated singular integrals and certain  quantitative weighted norm estimates were given.
	
	Note that the authors in \cite{culiuc_domination_2018} established a uniform domination of the family of trilinear multiplier forms with singularity over an one-dimensional subspace. Later Barron \cite{barron_weighted_2017} considered the sparse domination for rough bilinear singular integrals with \( \Omega \) in \( L^{\infty}(\mathbb{S}^{2n-1}) \). Recently, Borron's results was  improved by Grafakos, Wang and Xue\cite{grafakos_sparse_2022}.

\begin{thB}[\cite{grafakos_sparse_2022}]\label{thB}
	Let \( \Omega \in L^r (\mathbb{S}^{2n-1}), r > 4/3, \) and \( \int_{\mathbb{S}^{2n-1}} \Omega = 0 \). Let \(\mathcal{T}_{\Omega} \) be the rough bilinear singular integral operator defined in (\ref{1.1}) when \( m=2\ \). Then for \( p > \max \left\{ \frac{24n+3r-4}{8n+3r-4}, \frac{24n+r}{8n+r} \right\} \), there exists a constant \( C = C_{p,n,r} \) such that
	\[
	| \langle \mathcal{T}_{\Omega}(f_1, f_2), f_3 \rangle | \leq C \|\Omega\|_{L^r (\mathbb{S}^{2n-1})} \sup_S PSF_{S; p,p,p)} (f_1, f_2, f_3).
	\]
\end{thB}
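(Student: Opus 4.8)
The plan is to run the rough-singular-integral sparse machinery of Conde-Alonso et al. and Lerner, adapted to the bilinear setting as in Barron: decompose the kernel into dyadic annular pieces, establish quantitative single-scale estimates for them, and feed these together with the known $L^p$ bounds into a Calder\'on--Zygmund stopping-time recursion producing the sparse family. Two analytic inputs are needed. First, the $L^{p_1}\times L^{p_2}\to L^{p_0}$ boundedness of $\mathcal T_\Omega$ (with $\frac1{p_0}=\frac1{p_1}+\frac1{p_2}$) over a range of exponents: for $r>4/3$ this is supplied by Theorem A with $m=2$, which covers all $1<p_1,p_2<\infty$ with $\frac1{p_1}+\frac1{p_2}+\frac1r<2$ (equivalently the bilinear estimates of Grafakos--He--Honz\'ik). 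Second, single-scale estimates with a frequency-localized gain, which is the heart of the proof.

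For the single-scale analysis, fix a smooth radial $\phi$ with $\supp\phi\subset\{\tfrac12\le|\vec z|\le2\}$ and $\sum_{j\in\mathbb Z}\phi(2^{-j}\vec z)=1$ on $\mathbb R^{2n}\setminus\{0\}$, and write $\mathcal T_\Omega=\sum_{j\in\mathbb Z}T^j$ with kernel $K^j=K\,\phi(2^{-j}\cdot)$, supported where $|(x-y_1,x-y_2)|\sim2^j$. Two features drive the argument. (a) \emph{Locality at scale $2^j$}: if $\ell(Q)\gtrsim2^j$ and $x\in Q$ then $T^j(g_1\chi_{(3Q)^c},g_2\chi_{(3Q)^c})(x)=0$, so in the recursion only the top $O(1)$ scales $2^j\sim\ell(Q)$ require the sharp single-scale bound, the far scales contributing a summable geometric tail. (b) A single-scale estimate
\[
\|T^j(g_1,g_2)\|_{L^{q}}\ \lesssim\ \|\Omega\|_{L^r(\mathbb S^{2n-1})}\prod_{i=1}^2\|g_i\|_{L^{q_i}},\qquad\tfrac1q=\tfrac1{q_1}+\tfrac1{q_2},
\]
for exponents $q_1,q_2$ near the bottom of the range allowed by $r>4/3$ (in particular below $2$), with a geometric gain over this bound once the $g_i$ are Littlewood--Paley localized away from the resonant scale $2^{-j}$. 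The latter is obtained by computing $\widehat{K^j}$ and expanding $\Omega$ in wavelets $\Omega=\sum_{\mu\ge0}\sum_{\nu}\langle\Omega,\psi_{\mu,\nu}\rangle\psi_{\mu,\nu}$ on the sphere: each wavelet block yields a \emph{smooth} bilinear multiplier with derivative bounds polynomial in $2^\mu$, hence an $L^2\times L^2\to L^1$ estimate with the desired frequency decay, and summing the blocks in $\mu$ costs a factor $\|\Omega\|_{L^r}$ — this is precisely the step that requires $r>4/3$. Interpolating this $L^2\times L^2\to L^1$ bound against the crude estimates coming from the annular support of $K^j$ and against the $L^p$ bounds of the first input pushes $q_1,q_2$ below $2$; optimizing the interpolation parameters, with the cases $r\le2$ and $r\ge2$ treated separately, produces exactly the threshold $p>\max\{\frac{24n+3r-4}{8n+3r-4},\frac{24n+r}{8n+r}\}$.

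With both inputs in hand, the sparse bound follows from the standard recursion. Fix a large cube $Q_0$ carrying the relevant supports and expand $\langle\mathcal T_\Omega(f_1,f_2),f_3\rangle$; only scales $2^j\lesssim\ell(Q_0)$ survive. The top $O(1)$ scales $2^j\sim\ell(Q_0)$ are bounded by the single-scale estimate (b) and H\"older, giving $\lesssim\|\Omega\|_{L^r}|Q_0|\prod_i\langle|f_i|\rangle_{p,Q_0}$ (here $q_i\le p$, so $L^{q_i}$-averages are dominated by $L^p$-averages). For the remaining scales, perform a Calder\'on--Zygmund decomposition of the $f_i$ on $Q_0$ at heights comparable to $\langle|f_i|\rangle_{p,3Q_0}$: the ``good'' contribution is controlled by $|Q_0|\prod_i\langle|f_i|\rangle_{p,3Q_0}$ via the boundedness of $\mathcal T_\Omega$ from the first input, while each ``bad'' cube $P$ yields a term whose part supported in $3P$ seeds a recursion (adding $P$ to the sparse family) and whose interaction with the complement of $3P$ is absorbed by the geometric-decay half of (b). Iterating builds an $\eta$-sparse collection $\mathcal S$ and the estimate $|\langle\mathcal T_\Omega(f_1,f_2),f_3\rangle|\le C\|\Omega\|_{L^r}\sum_{Q\in\mathcal S}|Q|\prod_{i=1}^3\langle|f_i|\rangle_{p,Q}$; passing from $Q_0$ to all of $\mathbb R^n$ is the usual exhaustion together with finitely many shifted dyadic grids.

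The main obstacle is the single-scale estimate (b) with $q_1,q_2$ simultaneously below $2$ under the sole hypothesis $\Omega\in L^r$, $r>4/3$. A rough homogeneous kernel carries no pointwise smoothness, so the wavelet expansion of $\Omega$ is forced in order to produce a smooth (but $2^\mu$-dependent) bilinear multiplier at each wavelet scale; the convergence of the resulting $\mu$-series against $\|\Omega\|_{L^r}$ is exactly what pins the admissible range, and keeping the geometric decay alive through both that summation and the later summation over the far scales $2^j\gtrsim\ell(Q)$ — all while still landing at the stated $p$-threshold — is the delicate bookkeeping.
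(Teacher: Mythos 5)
Your proposal reproduces, in its analytic core, the same route as the actual proof of this theorem in \cite{grafakos_sparse_2022} (and as the multilinear generalization carried out in Section \ref{sec4} of this paper): a Littlewood--Paley/annular decomposition $K=\sum_j K_j$ of the rough kernel; single-scale information of two kinds, namely that each smooth piece is a bilinear Calder\'on--Zygmund kernel with constant growing like $2^{j(\gamma+2n/r)}$ (the content of Lemma \ref{cz}) and that the wavelet-based bilinear theory of \cite{grafakos_rough_2018,grafakos_l2times_2020} gives $L^{p_1}\times L^{p_2}\to L^p$ bounds with geometric decay $2^{-cj}$, which is exactly where $r>4/3$ enters; an interpolation between growth and decay; and a stopping-time construction of the sparse family. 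The genuine difference is in how the sparse bound is closed: you propose a hands-on Calder\'on--Zygmund recursion building $\mathcal S$ directly, whereas \cite{grafakos_sparse_2022} (and this paper) verify localized estimates for the stopped forms $\Lambda_{\mathcal Q}$ measured in the $\dot{\mathcal{X}}_{q}/\mathcal{Y}_{q}$ scale over stopping collections, interpolate at the level of those localized forms (Lemma \ref{interpolation}), and then invoke the abstract principle of \cite{conde-alonso_sparse_2017,barron_weighted_2017} recorded here as Lemma \ref{L1}. The abstract route buys uniformity over all finite truncations and an automatic treatment of the boundary and tail terms through the separation properties of stopping collections; in your direct recursion that bookkeeping must be redone by hand, and in particular the cancellation of the bad parts has to be played against the smooth pieces $K_j$ at scales $2^j$ far below the side length of the selected cubes, which is precisely what the $\dot{\mathcal{X}}_{q}$ estimates encode and what your sketch only gestures at (``absorbed by the geometric-decay half of (b)'').

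The point I would press is the threshold itself. The entire quantitative content of the statement is the range $p>\max\bigl\{\frac{24n+3r-4}{8n+3r-4},\frac{24n+r}{8n+r}\bigr\}$, and in your write-up this is asserted to ``fall out of optimizing the interpolation parameters'' without any computation. In the actual argument the threshold is exactly what results from interpolating the trilinear localized bound with exponents $(1,1,1)$ and constant $2^{j(\gamma+2n/r)}$ against the bound with exponents $(p_1,p_2,p')$ and constant $2^{-cj}$, where the admissible pairs $(p_1,p_2)$ and decay rates $c$ are the explicit ones supplied by the bilinear theory for $r>4/3$; one must exhibit the interpolated exponents, check that the resulting geometric series in $j$ converges in both regimes $j>0$ and $j\le 0$, and verify that all three exponents can be taken equal to a common $p$ strictly above the stated value (with the two branches of the maximum corresponding to $r\lessgtr 2$). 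Until that numerology is carried out, the proposal is a sound blueprint rather than a proof of the stated theorem.
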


The main purpose of this paper is to generalize this sparse bound to the full multilinear setting, extending to the entire integrability range \( r > 1 \) and allowing flexible control via sparse forms with adapted Lebesgue exponents. Our main result is as follows:
	
	\begin{theorem}{\label{th1.1}}
	Let \( 1 < p_1, \ldots, p_m < \infty \) and \( \frac{1}{p} = \sum_{i=1}^{m} \frac{1}{p_i} \) be such that \( \left( \frac{1}{p_1}, \ldots, \frac{1}{p_m} \right) \in \mathcal{H}^m(r) \). Suppose that \( \Omega \in L^r(\mathbb{S}^{mn-1}) \) with \( r > 1 \) and \( \int_{\mathbb{S}^{mn-1}} \Omega(\theta) \, d\sigma(\theta) = 0 \). Then there exist constants \( c = c(n, p_1, \ldots, p_m, r) \) and \( q_i > 1 + \frac{mn}{mn + cr} p_i \) for \( i = 1, \ldots, m+1 \) (with \( p_{m+1} = p' \)), such that there exists a constant \( C = C_{n, q_1, \ldots, q_m, r} \) satisfying
	$$\vert\left<\mathcal{T}_{\Omega}(f_1,\cdots,f_m),f_{m+1}\right>\vert\leq C\Vert\Omega\Vert_{L^{r}(\mathbb{S}^{mn-1})}\sup_{\mathcal{S}}{\rm PSF}_{\mathcal{S};q_1,\cdots,q_{m+1}}(f_1,\cdots,f_{m+1}).$$
\end{theorem}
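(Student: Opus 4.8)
The plan is to follow the route that has become standard for sparse domination of singular integrals with rough kernels: decompose $\mathcal{T}_{\Omega}$ dyadically in scale, reorganize the pieces so that each one is $L^p$-improving with a geometrically decaying operator norm, sparsely dominate each piece by a stopping-time recursion that uses only $L^p$-boundedness (not kernel regularity), and then sum. By duality it suffices to bound the $(m+1)$-linear form $\langle\mathcal{T}_{\Omega}(f_1,\dots,f_m),f_{m+1}\rangle$, and all $m+1$ functions may be treated symmetrically, since each adjoint $\mathcal{T}_{\Omega}^{*,i}$ is again a rough $m$-linear singular integral of the same type with $\|\Omega\|_{L^r}$-comparable kernel. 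For the scale decomposition, fix $\phi\in C_c^\infty(\mathbb{R}^{mn})$ supported in $\{2^{-1}\le|x|\le 2\}$ with $\sum_{j\in\mathbb{Z}}\phi(2^{-j}x)=1$ for $x\neq 0$, set $K_j(x):=K(x)\phi(2^{-j}x)$ and let $T_j$ be the $m$-linear convolution with $K_j$, so $\mathcal{T}_{\Omega}=\sum_j T_j$. Two facts are used repeatedly: $K_j$ is supported where $|x-y_1|,\dots,|x-y_m|\lesssim 2^j$, so $T_j$ is localized at scale $2^j$; and, since $\int_{\mathbb{S}^{mn-1}}\Omega=0$ and $\Omega\in L^1(\mathbb{S}^{mn-1})$, the Duoandikoetxea--Rubio de Francia estimate gives $|\widehat{K_j}(\xi)|\lesssim\|\Omega\|_{L^r(\mathbb{S}^{mn-1})}\min\{|2^j\xi|,\,|2^j\xi|^{-\delta_r}\}$ on $\mathbb{R}^{mn}$, for some $\delta_r>0$ depending on $r$.

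The technical heart is a family of decaying $L^p$-improving estimates. Littlewood--Paley decomposing each input, $f_i=\sum_{k_i}\Delta_{k_i}f_i$, the tensor $\Delta_{k_1}f_1\otimes\cdots\otimes\Delta_{k_m}f_m$ has frequency concentrated near $(2^{k_1},\dots,2^{k_m})$, so the Fourier bound above yields geometric decay of $\|T_j(\Delta_{k_1}f_1,\dots,\Delta_{k_m}f_m)\|$ whenever $\max_i(j+k_i)$ is large positive or all the $j+k_i$ are large negative; the remaining, genuinely multilinear, configurations are absorbed by the $L^{q_1}\times\cdots\times L^{q_m}\to L^{q}$ boundedness of Theorem~A for $(1/q_1,\dots,1/q_m)\in\mathcal{H}^m(r)$, together with elementary size estimates for the truncated kernels. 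Interpolating these inputs and then summing the Littlewood--Paley series --- which is licit precisely because each $q_i>1$, via vector-valued square-function bounds --- and regrouping the scales by their relative frequency offset produces a decomposition $\mathcal{T}_{\Omega}=\sum_{\ell}S_\ell$ for which
$$\|S_\ell(f_1,\dots,f_m)\|_{L^{q}}\ \lesssim\ 2^{-\theta|\ell|}\,\|\Omega\|_{L^r(\mathbb{S}^{mn-1})}\prod_{i=1}^{m}\|f_i\|_{L^{q_i}},\qquad \theta=\theta(n,q_1,\dots,q_m,r)>0.$$

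Each $S_\ell$ is then dominated by a sparse form by a Calderón--Zygmund stopping time in the spirit of Conde-Alonso et al.~\cite{conde-alonso_sparse_2017}: fed only by the $L^{q_1}\times\cdots\times L^{q_m}\to L^{q}$ bound above and a weak-type estimate for the associated maximal truncation (no kernel smoothness is needed), the recursion produces a sparse family $\mathcal{S}_\ell$ with
$$\bigl|\langle S_\ell(f_1,\dots,f_m),f_{m+1}\rangle\bigr|\ \lesssim\ (1+|\ell|)^{c_0}\,2^{-\theta|\ell|}\,\|\Omega\|_{L^r(\mathbb{S}^{mn-1})}\sum_{Q\in\mathcal{S}_\ell}|Q|\prod_{i=1}^{m+1}\langle f_i\rangle_{q_i,Q},$$
where $q_{m+1}$ is the dual exponent arising from $1/q=\sum_{i=1}^m 1/q_i$ (so $q_{m+1}=p'$ in the endpoint case $q_i=p_i$), and, by the symmetry noted above, $q_{m+1}$ is subject to the same kind of restriction as $q_1,\dots,q_m$. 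Summing over $\ell$ (the series $\sum_\ell(1+|\ell|)^{c_0}2^{-\theta|\ell|}$ converges) and replacing $\bigcup_\ell\mathcal{S}_\ell$ by a single sparse family at the cost of a dimensional factor gives the asserted inequality. The admissible exponents are then read off the interpolation of the second paragraph: the rate $\theta$ degrades like a power of the distance of $(1/q_1,\dots,1/q_m)$ to $\partial\mathcal{H}^m(r)$, and summing the Littlewood--Paley pieces costs an endpoint, so carrying the constants through shows the construction goes through exactly for $q_i>1+\frac{mn}{mn+cr}\,p_i$, with $c=c(n,p_1,\dots,p_m,r)$ the resulting decay constant.

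The main obstacle is the estimate of the second paragraph: promoting the \emph{global} bound of Theorem~A to a \emph{single-offset} bound with \emph{geometric decay} in $|\ell|$ and \emph{linear} dependence on $\|\Omega\|_{L^r(\mathbb{S}^{mn-1})}$, uniformly as $(1/q_1,\dots,1/q_m)$ approaches $\partial\mathcal{H}^m(r)$. In the multilinear setting this must be carried out for each of the $2^m$ high/low-frequency configurations $\alpha\in\{0,1\}^m$, and the decay rate available in each case is governed by how deep $(1/p_1,\dots,1/p_m)$ sits inside $\mathcal{H}^m(r)$ --- which is precisely what pins down the exponents in the conclusion. Once this decaying $L^p$-improving family is in hand, the stopping-time part of the third paragraph is routine and follows established templates.
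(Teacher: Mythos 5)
Your overall skeleton (frequency-offset decomposition $\mathcal{T}_\Omega=\sum_\ell S_\ell$, decaying piecewise bounds, sparse control of the pieces, summation) matches the paper, but there is a genuine gap at the step you declare routine. You claim that each $S_\ell$ admits a $(q_1,\dots,q_{m+1})$-sparse bound with constant $(1+|\ell|)^{c_0}2^{-\theta|\ell|}$, ``fed only by'' the $L^{q_1}\times\cdots\times L^{q_m}\to L^q$ operator norm and a weak-type bound for the maximal truncation, with ``no kernel smoothness needed.'' Operator-norm bounds alone never yield sparse bounds (sparse domination forces weighted and endpoint consequences that general bounded multilinear operators fail), and in the stopping-time templates you invoke \cite{conde-alonso_sparse_2017, barron_weighted_2017} the recursion is driven by \emph{localized} form estimates in which one input is a cancellative function supported on the stopping cubes (the $\dot{\mathcal{X}}_p$ estimates of Lemma~\ref{L1}); these are exactly where kernel size and smoothness of the pieces enter. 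Moreover, the constant in those localized (or, in the alternative route, maximal-truncation) estimates for $S_\ell$ does not decay: the Calder\'on--Zygmund constants of the smooth pieces \emph{grow} like $2^{\ell(\gamma+mn/r)}$ for $\ell>0$ (Lemma~\ref{cz}). To retain geometric decay in the sparse constant one must interpolate this growing endpoint-type localized bound against the decaying $\dot{\mathcal{X}}_{p_1}\times\mathcal{Y}_{p_2}\times\cdots\times\mathcal{Y}_{p'}$ bound coming from Lemma~\ref{multi}, and the interpolation has to be carried out in the stopping-collection norms $\mathcal{X}_q/\mathcal{Y}_q$ with $q_i=1+\epsilon_i p_i$; this is precisely the paper's Lemma~\ref{interpolation}, whose balancing of $2^{j(\gamma+mn/r)}$ against $2^{-cj}$ is what produces the threshold $q_i>1+\frac{mn}{mn+cr}p_i$. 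In your write-up the threshold is asserted but the mechanism that produces it is absent, and the ``no smoothness needed'' shortcut would not deliver the decay in $\ell$ that your summation requires.

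A second, more repairable, issue is your second-paragraph estimate $\|S_\ell\|_{L^{q_1}\times\cdots\times L^{q_m}\to L^q}\lesssim 2^{-\theta|\ell|}\|\Omega\|_{L^r}$. For $m\geq 2$ this cannot be extracted from the Fourier bound on $\widehat{K_j}$ together with Theorem~A ``and elementary size estimates'': Theorem~A carries no decay, Plancherel is not available multilinearly, and obtaining decay for the high-frequency offsets up to the boundary of $\mathcal{H}^m(r)$ is the main content of \cite{dosidis_multilinear_2024} (quoted here as Lemma~\ref{multi}), built on the wavelet machinery of the earlier bilinear works. This part of your argument should be replaced by a citation rather than a sketch; once you do so, and once the per-piece sparse step is rerouted through localized estimates plus an interpolation in the $\mathcal{X}/\mathcal{Y}$ scales as above, your plan coincides in substance with the paper's proof.
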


	In order to state our corollaries, we prepare some background and definitions for certain classes of weights. Recall that, it was Grafakos and Torres \cite{grafakos_multilinear_2002} who initiated the weighted theory for the multilinear singular operators but it was not until 2009 that Lerner et al. \cite{lerner_new_2009} introduced the the multiple  Muckenhuopt weight class, denoted by $A_{\bf p}$, which provides a natural analogue of the linear theory.
	\begin{definition}[Multiple weight class $A_{\bf p}$, \cite{lerner_new_2009}]\label{def1}
		Let $1\leq p_1,\cdots,p_m<\infty$, ${\bf w}=(w_1,\cdots,w_m)$, where each $w_i$ is a nonegative function defined on $\mathbb{R}^n$, and denote $v_{\bf w}=\prod^{m}_{j=1}w_j^{{p}/{p_j}}$. We say ${\bf w}\in A_{\bf p}$ if
		$$[{\bf w}]_{A_{\bf p}}=\sup_{Q}\left(\frac{1}{\vert Q\vert}\int_Q v_{\bf w}(t)dt\right)^{\frac{1}{p}}\prod^{m}_{i=1}\left(\frac{1}{\vert Q\vert}\int_Q w_i^{1-p^{'}_i}(t)\right)^{\frac{1}{p^{'}_i}}<\infty,$$
		where the supremum is taken over all cubes $Q\subset\mathbb{R}^{n}$, and the term $\left(\frac{1}{\vert Q\vert}\int_Q w_i^{1-p^{'}_i}(t)dt\right)^{\frac{1}{p^{'}_i}}$ is understood as $(\inf_Q w_i)^{-1}$ when $p_i=1$, and $p'=p/(p-1)$ is the dual exponent of $p$
	\end{definition}
	 For $m\geq2$, given ${\bf p}=(p_1,\cdots,p_m)$ with $1\leq p_1,\cdots,p_m<\infty$ and ${\bf r}=(r_1,\cdots,r_{m+1})$ with $1\leq r_1,\cdots,r_{m+1}<\infty$, we say that ${\bf r}\prec {\bf p}$ whenever
	$$r_i<p_i,~i=1,\cdots,m~{\rm and}~r_{m+1}^{'}>p,~{\rm where}~\frac{1}{p}:=\frac{1}{p_1}+\cdots+\frac{1}{p_m}.$$
	A more broader class of weights than $A_{\bf p}$ was introduced by Li et al. in \cite{li_extrapolation_2020}.
	\begin{definition}[$A_{\bf p,r}$ weight class, \cite{li_extrapolation_2020}]\label{def2}
		Let $m\geq 2$ be an integer, ${\bf p}=(p_1,\cdots,p_m)$ with $1\leq p_1,\cdots,p_m<\infty$ and ${\bf r}=(r_1,\cdots,r_{m+1})$ with $1\leq r_1,\cdots,r_{m+1}<\infty$. $1/p=\sum^{m}_{k=1}1/p_k$. For each $w_k\in L^{1}_{loc}$, set
		$w=\prod^{m}_{k=1}w_k^{p/p_k}.$
		We say that ${\bf w}=(w_1,\cdots,w_m)\in A_{\bf p,r}$ if $0<w_i<\infty$, $1\leq i\leq m$ and $[w]_{A_{\bf p,r}}<\infty$ with
		$$[w]_{A_{\bf p,r}}=\sup_Q\left(\frac{1}{\vert Q\vert}\int_Q w(x)^{\frac{r^{'}_{m+1}}{r^{'}_{m+1}-p}}dx\right)^{\frac{1}{p}-\frac{1}{r^{'}_{m+1}}}\prod^{m}_{k=1}\left(\frac{1}{\vert Q\vert}\int_Q w_k(x)^{-\frac{1}{\frac{p_k}{r_k}-1}}dx\right)^{\frac{1}{r_k}-\frac{1}{p_k}}.$$
	\end{definition}
	
	By employing sparse domination techniques, we establish some weighted estimates for the operator ${\mathcal T}_{\Omega}$. The first theorem addresses settings involving multiple weights, while the second focuses on the single-weight case.
	
	\begin{corollary}\label{cor1}
		Under the  assumptions as in Theorem \ref{th1.1}, let ${\boldsymbol s}=(s_1,\cdots,s_m),~{\bf q}=(q_1,\cdots,q_{m+1})$. Let
		$\mu_{\bf v}=\prod^{m}_{i=1}v_i^{{s}/{s_k}}$
with $\frac{1}{s}=\sum^{m}_{i=1}\frac{1}{s_i}$ and $1<s<({mn+mnp'+cr})/{mnp'}$. Let ${\boldsymbol s}\prec {\bf q}$ with $s_{m+1}=s'$. Then there is a constant $C=C_{{\bf q},{\boldsymbol s},r,n}$ such that
		\[
		\Vert\mathcal{T}_{\Omega}(f_1,\cdots,f_m)\Vert_{L^{s}(\mu_{\bf v})}\leq C\Vert\Omega\Vert_{L^r}[{\bf v}]^{\max_{1\leq i\leq m}\left\{\frac{q_i}{s_i-q_i}\right\}}_{A_{{\boldsymbol s},{\bf q}}}\prod^{m}_{i=1}\Vert f_i\Vert_{L^{s_i}(v_i)}.\label{cor1.5.1}\\
		\]
	\end{corollary}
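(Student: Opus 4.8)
The plan is to deduce the weighted bound from the sparse domination of Theorem~\ref{th1.1} by duality, reducing everything to a quantitative weighted estimate for the resulting positive sparse form.

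Set $\sigma:=\mu_{\bf v}^{\,1-s'}$, the dual weight of $\mu_{\bf v}$ for the exponent $s$, so that $\|g\|_{L^{s'}(\sigma)}=\|g\,\mu_{\bf v}^{-1/s}\|_{L^{s'}}$. Since $1<s<\infty$,
\[
\|\mathcal{T}_{\Omega}(f_1,\dots,f_m)\|_{L^{s}(\mu_{\bf v})}
=\sup\bigl\{\,|\langle \mathcal{T}_{\Omega}(f_1,\dots,f_m),g\rangle|:\ \|g\|_{L^{s'}(\sigma)}\le 1\,\bigr\}.
\]
Fixing such a $g$ and, by homogeneity, nonnegative $f_i$, Theorem~\ref{th1.1} supplies a sparse family $\mathcal{S}$ (depending on $f_1,\dots,f_m,g$) and exponents $q_1,\dots,q_{m+1}$ with $p_{m+1}=p'$, for which
\[
|\langle \mathcal{T}_{\Omega}(f_1,\dots,f_m),g\rangle|
\le C\,\|\Omega\|_{L^{r}}\sum_{Q\in\mathcal{S}}|Q|\,\langle g\rangle_{q_{m+1},Q}\prod_{i=1}^{m}\langle f_i\rangle_{q_i,Q}.
\]
The hypothesis $1<s<(mn+mnp'+cr)/(mnp')$ is precisely what permits choosing $q_{m+1}$ close enough to $1+\tfrac{mn}{mn+cr}p'$ that $s<q_{m+1}'$; this, together with the relations between ${\boldsymbol s}$ and ${\bf q}$ assumed in the corollary — which, for $A_{{\boldsymbol s},{\bf q}}$ to be meaningful, read $q_i<s_i$ ($1\le i\le m$) and $s<q_{m+1}'$ — and with $1/s=\sum_{i\le m}1/s_i$, $s_{m+1}=s'$, places $({\boldsymbol s},{\bf q})$ in the admissible range of Definition~\ref{def2}, so that $[{\bf v}]_{A_{{\boldsymbol s},{\bf q}}}<\infty$ may be used. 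It therefore suffices to prove, uniformly over sparse families $\mathcal{S}$,
\[
\sum_{Q\in\mathcal{S}}|Q|\,\langle g\rangle_{q_{m+1},Q}\prod_{i=1}^{m}\langle f_i\rangle_{q_i,Q}
\ \lesssim\ [{\bf v}]_{A_{{\boldsymbol s},{\bf q}}}^{\,\theta}\ \|g\|_{L^{s'}(\sigma)}\prod_{i=1}^{m}\|f_i\|_{L^{s_i}(v_i)},\qquad
\theta:=\max_{1\le i\le m}\frac{q_i}{s_i-q_i},
\]
with implied constant depending only on $n,r,{\boldsymbol s},{\bf q}$; feeding this into the two displays above gives Corollary~\ref{cor1}.

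For the weighted sparse bound I would first run Hölder's inequality inside each $Q\in\mathcal{S}$, with exponents $s_i/q_i$ and $(s_i/q_i)'$ applied to $\langle f_i\rangle_{q_i,Q}$ and $s'/q_{m+1}$ and $(s'/q_{m+1})'$ applied to $\langle g\rangle_{q_{m+1},Q}$. Using the algebraic identity $\sigma^{-q_{m+1}/(s'-q_{m+1})}=\mu_{\bf v}^{\,q_{m+1}'/(q_{m+1}'-s)}$, this raises the integrability exponents $q_i$ to $s_i$ and $q_{m+1}$ to $s'$, bringing the weights $v_i$ and $\sigma$ into the averages, at the cost of certain local weight-averages whose product over all indices equals, for every $Q$, exactly the expression whose supremum defines $[{\bf v}]_{A_{{\boldsymbol s},{\bf q}}}$. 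One then sums over $Q\in\mathcal{S}$: because $q_i<s_i$ and $q_{m+1}<s'$, the surviving weighted averages are controlled on $E_Q$ by appropriate weighted dyadic maximal functions, and the pairwise disjointness of $\{E_Q\}$ together with the boundedness of those maximal operators on the relevant weighted Lebesgue spaces — equivalently, a weighted Carleson embedding — followed by a discrete Hölder inequality with exponents $s_1,\dots,s_m,s'$, collapses the sum into $\|g\|_{L^{s'}(\sigma)}\prod_i\|f_i\|_{L^{s_i}(v_i)}$.

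The main obstacle is the precise power of $[{\bf v}]_{A_{{\boldsymbol s},{\bf q}}}$. A naive factorization — extracting $[{\bf v}]_{A_{{\boldsymbol s},{\bf q}}}$ to the first power and discarding the local weight-averages — leaves an \emph{unweighted} sparse sum of the type $\sum_Q|Q|\prod_i\langle G_i\rangle_{1,Q}^{1/s_i}\langle G_{m+1}\rangle_{1,Q}^{1/s'}$ with $G_i\in L^1$, which in general diverges; the weights must therefore be retained through the summation. Producing the sharp exponent $\theta=\max_{1\le i\le m}q_i/(s_i-q_i)$ then requires the sharp reverse Hölder inequality for the auxiliary weights $v_i^{-q_i/(s_i-q_i)}$ and $\mu_{\bf v}^{\,q_{m+1}'/(q_{m+1}'-s)}$ — whose $A_\infty$-characteristics one must first bound by powers of $[{\bf v}]_{A_{{\boldsymbol s},{\bf q}}}$ — so as to ``open'' the exponents $q_i$ slightly, making the sum converge, after which one optimizes over the amount of opening. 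Alternatively, this whole weighted sparse estimate can be invoked from the ``positive sparse form $\Rightarrow$ $A_{{\bf p},{\bf r}}$-weighted inequality'' theory around \cite{li_extrapolation_2020}, used in the bilinear case in \cite{grafakos_sparse_2022}; in that case one need only check that the exponents $q_1,\dots,q_{m+1}$ produced by Theorem~\ref{th1.1} are admissible for the weight class $A_{{\boldsymbol s},{\bf q}}$ — which, once more, is exactly the content of the constraint on $s$.
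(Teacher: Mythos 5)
Your first step coincides with the paper's: dualize with $\sigma=\mu_{\bf v}^{1-s'}$, apply Theorem \ref{th1.1}, and reduce to the uniform estimate ${\rm PSF}_{\mathcal S;q_1,\dots,q_{m+1}}(f_1,\dots,f_m,g)\lesssim[{\bf v}]_{A_{\boldsymbol s,\bf q}}^{\theta}\,\prod_i\Vert f_i\Vert_{L^{s_i}(v_i)}\Vert g\Vert_{L^{s'}(\sigma)}$ over all sparse families; your verification that the constraint on $s$ makes $s<q_{m+1}'$ attainable and that your algebraic identity $\sigma^{-q_{m+1}/(s'-q_{m+1})}=\mu_{\bf v}^{q_{m+1}'/(q_{m+1}'-s)}$ matches Definition \ref{def2} is also correct. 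The genuine gap is that the central quantitative step is never actually proved. Your main device (Hölder inside each cube with exponents $s_i/q_i$, extracting the $A_{\boldsymbol s,\bf q}$ expression cube by cube) leaves, as you yourself concede, the sum $\sum_{Q\in\mathcal S}|Q|\prod_i\langle |f_i|^{s_i}v_i\rangle_{1,Q}^{1/s_i}\langle |g|^{s'}\sigma\rangle_{1,Q}^{1/s'}$, which diverges in general; at that point the argument forks into two unexecuted alternatives — a reverse-Hölder/$A_\infty$ self-improvement scheme that you do not carry out and that is not shown to yield the stated exponent $\max_{1\le i\le m}\{q_i/(s_i-q_i)\}$, or an appeal to a black-box ``sparse form $\Rightarrow A_{{\bf p},{\bf r}}$'' theorem without identifying a result with this precise power. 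So the weighted sparse-form bound, which is the whole content of the corollary beyond duality, remains open in your write-up.

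The paper closes exactly this gap by the Culiuc--Di Plinio--Ou change of variables rather than by reverse Hölder: one sets $w_i=v_i^{q_i/(q_i-s_i)}$ ($i\le m$), $w_{m+1}=\sigma^{q_{m+1}/(q_{m+1}-s')}$ and $f_i=g_iw_i^{1/q_i}$, so that each summand of the sparse form factors into three pieces: (i) $\prod_i w_i(E_Q)^{1/s_i}\bigl(\langle g_i^{q_i}w_i\rangle_Q/\langle w_i\rangle_Q\bigr)^{1/q_i}$, (ii) $\prod_i\langle w_i\rangle_Q^{1/q_i-1/s_i}$, which is exactly the $[{\bf v}]_{A_{\boldsymbol s,\bf q}}$ expression, and (iii) the sparseness correction $|Q|\prod_i(\langle w_i\rangle_Q/w_i(E_Q))^{1/s_i}$. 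The crucial point you are missing is that keeping the data in the weighted-average form $\langle g_i^{q_i}w_i\rangle_Q/\langle w_i\rangle_Q$ (rather than $\langle|f_i|^{s_i}v_i\rangle_Q$) makes piece (i) summable: the sets $E_Q$ are pairwise disjoint, the weighted maximal operators $M_{q_i,w_i}$ are bounded on $L^{s_i}(w_i)$ because $q_i<s_i$ (this is where $\boldsymbol s\prec{\bf q}$ enters), and a discrete Hölder inequality finishes the sum. Piece (iii) is handled by the Lerner--Nazarov trick, normalizing $\prod_i w_i^{-x_i/2}=1$ with $x_i=(q_i-s_i)/(q_is_i)$, applying Hölder on $E_Q$ and using $|E_Q|\ge\eta|Q|$; it is precisely here, not from any reverse Hölder inequality, that the power $\max_i\{-1/(x_is_i)\}=\max_i\{q_i/(s_i-q_i)\}$ of $[{\bf v}]_{A_{\boldsymbol s,\bf q}}$ arises. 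Without this mechanism (or a fully executed substitute), your proposal does not establish the stated dependence on $[{\bf v}]_{A_{\boldsymbol s,\bf q}}$.
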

	
	\begin{corollary}\label{cor2}
		Under the same assumptions as in Theorem \ref{th1.1}, for $w\in A_{{p}/{2}}$ and $$\max\left\{2,~\max\limits_{1\leq i\leq m+1}\{1+\frac{mn}{mn+cr}p_i\}\right\}<p<\max\limits_{1\leq i\leq m+1}\left\{\frac{2mn+2mnp_i+2cr}{mnp_i}\right\},$$ there exists a constant $C=C_{w,p,n,r}$ such that
		\[
		\Vert \mathcal{T}_{\Omega}(f_1,\cdots,f_m)\Vert_{L^{{p}/{m}}(w)}\leq C\Vert\Omega\Vert_{L^r}\prod^{m}_{i=1}\Vert f_i\Vert_{L^p(w)}.\label{cor1.6.1}
		\]
	\end{corollary}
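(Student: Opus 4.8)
The plan is to deduce Corollary \ref{cor2} from Corollary \ref{cor1} by specializing to the diagonal weight. In Corollary \ref{cor1}, take $v_1=\cdots=v_m=w$ and $s_1=\cdots=s_m=p$; then $\frac1s=\sum_{i=1}^m\frac1{s_i}=\frac mp$, so $s=p/m$, and $\mu_{\bf v}=\prod_{i=1}^m w^{s/s_i}=w$. Thus the conclusion of Corollary \ref{cor1} becomes
\[
\|\mathcal T_\Omega(f_1,\dots,f_m)\|_{L^{p/m}(w)}\le C\,\|\Omega\|_{L^r}\,[{\bf v}]_{A_{{\boldsymbol s},{\bf q}}}^{\;\max_{1\le i\le m}\{q_i/(p-q_i)\}}\prod_{i=1}^m\|f_i\|_{L^p(w)} .
\]
Therefore it is enough to check (a) that the parameters ${\boldsymbol s}=(p,\dots,p,(p/m)')$ and the tuple ${\bf q}=(q_1,\dots,q_{m+1})$ furnished by Theorem \ref{th1.1} are admissible for Corollary \ref{cor1} — i.e. ${\boldsymbol s}\prec{\bf q}$ (equivalently $q_i<p$ for $1\le i\le m$ and $q_{m+1}<(p/m)'$) together with $1<s<\frac{mn+mnp'+cr}{mnp'}$ — and (b) that the diagonal weight satisfies $[(w,\dots,w)]_{A_{{\boldsymbol s},{\bf q}}}\le C\,[w]_{A_{p/2}}^{\,m/p}$, after which the weight factor is harmless because $w$ is fixed (it is absorbed into $C_{w,p,n,r}$).

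For (a), Theorem \ref{th1.1} allows each $q_i$ to be taken as close as we wish to, and strictly above, $1+\frac{mn}{mn+cr}p_i$ (with $p_{m+1}=p'$). The requirement $q_i<p$ for $i\le m$ then forces $p>1+\frac{mn}{mn+cr}p_i$, and isolating $p$ in $q_{m+1}<(p/m)'$ produces an upper bound for $p$; combined with the additional upper restriction coming from $1<s=p/m<\frac{mn+mnp'+cr}{mnp'}$, these conditions translate, after a short computation, into exactly the window $\max\bigl\{2,\ \max_{1\le i\le m+1}(1+\tfrac{mn}{mn+cr}p_i)\bigr\}<p<\max_{1\le i\le m+1}\tfrac{2mn+2mnp_i+2cr}{mnp_i}$ recorded in the statement. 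The hypothesis $p>2$ also guarantees $p/m,(p/m)'\in(1,\infty)$ and that $A_{p/2}$ is a genuine Muckenhoupt class, so that $w$ and its small powers obey a reverse Hölder inequality with constants depending only on $[w]_{A_{p/2}}$.

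The technical heart is (b). Reading off Definition \ref{def2} with $\mu_{\bf v}=w$, $s_i=p$ and $r_i=q_i$, and writing $\langle g\rangle_Q=\frac1{|Q|}\int_Q g$,
\[
[(w,\dots,w)]_{A_{{\boldsymbol s},{\bf q}}}=\sup_Q\;\bigl\langle w^{\,c_{m+1}}\bigr\rangle_Q^{\,b_{m+1}}\;\prod_{i=1}^m\bigl\langle w^{-a_i}\bigr\rangle_Q^{\,b_i},
\]
where $a_i=\frac{q_i}{p-q_i}$, $b_i=\frac{p-q_i}{pq_i}$, $c_{m+1}=\frac{q_{m+1}'}{q_{m+1}'-p/m}$ and $b_{m+1}=\frac mp-\frac1{q_{m+1}'}$, all positive precisely under the admissibility in (a). The key book-keeping identity is $\sum_{i=1}^m a_ib_i=\frac mp=b_{m+1}c_{m+1}$ (indeed $a_ib_i=\frac1p$ for each $i$, and the $(m+1)$-st product collapses by a one-line computation). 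When the $q_i$ are close enough to $1$ relative to $p$ — which the window for $p$ arranges — the exponents $a_i$ and $c_{m+1}$ lie in the range where Jensen's inequality gives $\langle w^{-a_i}\rangle_Q\le\langle w^{1-(p/2)'}\rangle_Q^{\,a_i((p/2)-1)}$ and $\langle w^{c_{m+1}}\rangle_Q\le\langle w\rangle_Q^{\,c_{m+1}}$; multiplying these and using $\sum_i a_ib_i((p/2)-1)=\frac{m(p-2)}{2p}$ and $b_{m+1}c_{m+1}=\frac mp$ yields
\[
[(w,\dots,w)]_{A_{{\boldsymbol s},{\bf q}}}\le\sup_Q\Bigl(\langle w\rangle_Q\,\langle w^{1-(p/2)'}\rangle_Q^{\,(p/2)-1}\Bigr)^{m/p}=[w]_{A_{p/2}}^{\,m/p},
\]
which is (b). If some exponent is too large for Jensen one replaces it by the reverse Hölder inequality for the $A_\infty$ weight $w$ (respectively $w^{1-(p/2)'}$), whose constants are controlled by $[w]_{A_{p/2}}$; this still gives $[(w,\dots,w)]_{A_{{\boldsymbol s},{\bf q}}}\lesssim[w]_{A_{p/2}}^{\,\theta}$ for some $\theta=\theta(m,p)$.

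I expect the main obstacle to be the bookkeeping in (a): choosing the $q_i$ so that they are simultaneously admissible for Theorem \ref{th1.1} and lie in the range where the Hölder/Jensen steps of (b) are licit, and verifying that the resulting constraints collapse exactly to the displayed window for $p$; relatedly, in (b) one must confirm that every power of $w$ that appears falls inside the reverse Hölder window provided by $w\in A_{p/2}$. Once (a) and (b) are in hand, Corollary \ref{cor1} applied to the diagonal data gives $\|\mathcal T_\Omega(f_1,\dots,f_m)\|_{L^{p/m}(w)}\le C_{w,p,n,r}\|\Omega\|_{L^r}\prod_{i=1}^m\|f_i\|_{L^p(w)}$, which is Corollary \ref{cor2}.
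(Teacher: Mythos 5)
Your route (specializing Corollary \ref{cor1} to the diagonal data $v_1=\cdots=v_m=w$, $s_1=\cdots=s_m=p$) is genuinely different from the paper's, which does not pass through Corollary \ref{cor1} at all: the paper applies Theorem \ref{th1.1} and duality directly and then bounds $\mathrm{PSF}_{\mathcal{S};q_1,\dots,q_{m+1}}(f_1,\dots,f_m,f_{m+1})$ by $\prod_i\|f_i\|_{L^{\xi}(w)}\|f_{m+1}\|_{L^{\rho}(\sigma)}$ with $\sigma=w^{-\frac{2}{2-\xi}}$, following Barron's Section 5 (sparseness/$A_\infty$ and maximal-function arguments, with constants allowed to depend on $w$). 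The decisive problem with your reduction is step (b). For the positive-power factor you invoke ``Jensen'' to get $\langle w^{c_{m+1}}\rangle_Q\le\langle w\rangle_Q^{c_{m+1}}$, but since $c_{m+1}=\frac{q_{m+1}'}{q_{m+1}'-p/m}>1$ Jensen gives exactly the opposite inequality; the inequality you need is a reverse H\"older inequality at the \emph{fixed} exponent $c_{m+1}$. This exponent is bounded away from $1$ independently of $w$, because Theorem \ref{th1.1} forces $q_{m+1}>1+\frac{mn}{mn+cr}p'$, hence $q_{m+1}'$ is bounded above; on the other hand the reverse H\"older exponent of an $A_{p/2}$ weight can be arbitrarily close to $1$ and is not controlled by $[w]_{A_{p/2}}$ in the required way. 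In fact the claim $w\in A_{p/2}\Rightarrow(w,\dots,w)\in A_{{\boldsymbol s},{\bf q}}$ is simply false: take $w(x)=|x|^{-n+\delta}\in A_1\subset A_{p/2}$ with $\delta$ small enough that $c_{m+1}(n-\delta)\ge n$; then $w^{c_{m+1}}$ fails to be locally integrable, so $\langle w^{c_{m+1}}\rangle_Q=\infty$ for every cube containing the origin and $[(w,\dots,w)]_{A_{{\boldsymbol s},{\bf q}}}=\infty$, so Corollary \ref{cor1} yields nothing for such $w$. (The negative-power factors also need $q_i\le 2$ for your Jensen step, which is not guaranteed since $q_i>1+\frac{mn}{mn+cr}p_i$ may exceed $2$.)

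There is also a parameter mismatch in step (a). Corollary \ref{cor1} requires $s>1$, i.e. $p>m$ in your specialization, whereas the stated window only imposes $p>2$, so for $m\ge 3$ your reduction does not cover the claimed range; moreover ${\boldsymbol s}\prec{\bf q}$ with $s_{m+1}=s'=(p/m)'$ forces $q_{m+1}<(p/m)'$, while the displayed upper bound $p<\max_i\frac{2mn+2mnp_i+2cr}{mnp_i}$ corresponds to the condition $q_i<(p/2)'$ arising in the paper's Barron-style argument; these coincide only when $m=2$, so the constraints do not ``collapse exactly'' to the stated window. To salvage a proof along the paper's lines you should argue as in the paper: use Theorem \ref{th1.1}, dualize $L^{p/m}(w)$, and estimate the sparse form directly against $\|f_i\|_{L^{p}(w)}$ and the dual weight, using sparseness together with the $A_{p/2}$ (and $A_\infty$/reverse H\"older, $w$-dependent) properties, rather than attempting to place the diagonal weight in $A_{{\boldsymbol s},{\bf q}}$.
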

	
The main idea in the proof of Theorem \ref{th1.1} from \cite{grafakos_sparse_2022} is to elaborate on the decomposition of the rough kernel into smooth kernels with controlled (summable) growth of constants. However, in the case of multilinear operators, more sophisticated interpolation theorems are required. It is necessary to generalize the interpolation in \cite{grafakos_sparse_2022}, which represents the main difficulty to be overcome in this paper.
	
	The article is organized as follows. Section \ref{sec2} contains definitions and basic lemmas. An analysis of the Calder\'{o}n-Zygmund kernel is given in Section \ref{sec3}. Sections \ref{sec4} and \ref{sec5} are devoted to the demonstration of the proof of Theorem \ref{th1.1} and its corollaries. Throughout this paper, the notation \(\lesssim\) will be used to denote an inequality with an inessential constant on the right. We denote by \(\ell(Q)\) the side length of a cube \(Q\) in \(\mathbb{R}^n\) and by \(\text{diam}(Q)\) its diameter. For \(\lambda > 0\) we use the notation \(\lambda Q\) for the cube with the same center as \(Q\) and side length \(\lambda \ell(Q)\).

	\section{Definitions and Main Lemmas}\label{sec2}
	
	Inspired by the techniques in \cite{grafakos_sparse_2022}, we refine their approach to the multilinear singular integral. Firstly, we consider a general multilinear operator that commutes with translations
	\begin{equation}\label{jieduan1}
		{\mathcal T}[K](f_1,\cdots,f_m)(x)={\rm p.v.}\int_{\mathbb{R}^{mn}}K(x-y_1,\cdots,x-y_m) \prod_{i=1}^m f_i(y_i)d\vec{y}
	\end{equation}
	and assume it is bounded multilinear operator mapping $L^{p_1}(\mathbb{R}^n)\times\cdots L^{p_m}(\mathbb{R}^n)\to L^{p}(\mathbb{R}^n)$ for some $1<p_1,\cdots,p_m<\infty$, with $\frac{1}{p}=\sum^{m}_{i=1}\frac{1}{p_i}$. It is assumed that the kernel $K$ of ${\mathcal T}[K]$ has a decomposition of the form
	\begin{equation}\label{jieduan2}
		K(x_1,\cdots,x_m)=\sum_{s\in\mathbb{Z}}K_s(x_1,\cdots,x_m),
	\end{equation}
	where $K_s$ is a smooth truncation of $K$ that enjoys the property
	$${\rm supp}K_s\subset\{(x_1,\cdots,x_m)\in\mathbb{R}^{mn}:2^{s-2}<\vert x_1\vert<2^{s},\cdots,2^{s-2}<\vert x_m\vert<2^{s}\}.$$
	
	The truncation of ${\mathcal T}[K]$ is defined as
	\begin{equation}\nonumber\label{jieduan3}
		{\mathcal T}[K]^{t_2}_{t_1}(f_1,\cdots,f_m)(x):=\sum_{t_1<s<t_m}\int_{\mathbb{R}^{mn}}K_s(x-x_1,\cdots,x-x_m)\prod_{i=1}^m f_i(x_i)d\vec{x},
	\end{equation}
	where $0<t_1<t_2<\infty$. See section 2.1 in \cite{grafakos_sparse_2022} for remarks on this type of truncated operators. In this work, we assume that the truncated norm satisfies
	\begin{equation}\label{CT1}
		\sup_{0<t_1<t_2<\infty}\Vert {\mathcal T}[K]^{t_2}_{t_1}\Vert_{L^{p_1}\times\cdots\times L^{p_m}\to L^{p}}<\infty
	\end{equation}
	for $1<p_1,\cdots,p_m<\infty$, with $\frac{1}{p}=\sum^{m}_{i=1}\frac{1}{p_i}$.
	Since ${\mathcal T}$ is a multilinear operator, we often work with the multilinear form of the type 
	$$<{\mathcal T}(f_1,\cdots,f_m),f_{m+1}>=\int_{\mathbb{R}^{n}}{\mathcal T}(f_1,\cdots,f_m)(x)f_{m+1}(x)dx.$$
	
	In our case, the multilinear truncated form is
	$$<{\mathcal T}[K]^{t_2}_{t_1}(f_1,\cdots,f_m),f_{m+1}>=\int_{\mathbb{R}^n}{\mathcal T}[K]^{t_2}_{t_1}(f_1,\cdots,f_m)f_{m+1}dx.$$
	Denoting by $C_{\mathcal T}(p_1,\cdots,p_m,p)$ the following constants
	\begin{equation}\label{jieduan5}
		C_{\mathcal T}(p_1,\cdots,p_m,p):=\sup_{0<t_1<t_2<\infty}\frac{\vert<{\mathcal T}[K]^{t_2}_{t_1}(f_1,\cdots,f_m),f_{m+1}>\vert}{\Vert f_1\Vert_{L^{p_1}}\cdots\Vert f_m\Vert_{L^{p_m}}\Vert f_{m+1}\Vert_{L^{p'}}}
	\end{equation}
	then (\ref{CT1}) is equivalent to $C_{\mathcal T}(p_1,\cdots,p_m,p)<\infty$.
	\begin{remark}\label{rem}\cite{grafakos_sparse_2022}
		If a multilinear operator of the form (\ref{jieduan1}) is bounded from $L^{p_1}\times\cdots\times L^{p_m}\to L^{p}$ with $p\geq 1$, then so do all of its smooth truncations with kernels
		$$K(x_1,\cdots,x_m)G(x_1/2^{t_1})\cdots G(t_m/2^{t_m})$$
		uniformly on $t_1,\cdots,t_m$. Here $G$ is any function whose fourier transform is integrable.
	\end{remark}
	
	\begin{definition}[Stopping collection \cite{conde-alonso_sparse_2017}]
		Let $\mathcal D$ be a fixed dyadic lattice in $\mathbb{R}^n$ and $Q\in\mathcal{D}$ be a fixed dyadic cube in $\mathbb{R}^n$. A collection $\mathcal{Q}\subset\mathcal{D}$ of dyadic cubes is a stopping collection with top $Q$ if the elements of $\mathcal{Q}$ satisfy
		$$L,~L'\in\mathcal{Q},~L\cap L'\neq\emptyset\Rightarrow L=L'$$
		$$L\in\mathcal{Q}\Rightarrow L\subset 3Q,$$
		and enjoy the separation properties\\
		(i) if $L,~L'\in\mathcal{Q},~\vert s_{L}-s_{L'}\vert\geq 8$, then $7L\cap 7L'=\emptyset$.\\
		(ii) $\cup_{\substack{L\in\mathcal{Q}\\3L\cap 2Q\neq\emptyset}}9L\subset\cup_{L\in\mathcal{Q}}L:=sh\mathcal{Q}$.\\
		Here $s_L=\log_2 l(L)$, where $l(L)$ is the length of the cube $L$.
	\end{definition}
	
	Let ${\bf 1}_{A}$ be the characteristic function of a set $A$. We use $M_p$ to denote the power version of the Hardy-Littlewood maximal function
	$$M_p(f)(x)=\sup_{x\in Q}\left(\frac{1}{\vert Q\vert}\int_Q\vert f(y)\vert^pdy\right)^{\frac{1}{p}},$$
	where the supremum is taken over cubes $Q\subset\mathbb{R}^n$ containing $x$.
	
	We need the following definition.
	\begin{definition}[$\mathcal{Y}_p(\mathcal{Q})~norm$, \cite{conde-alonso_sparse_2017}]
		Let $1\leq p\leq\infty$ and let $\mathcal{Y}_p(\mathcal{Q})$ be the subspace of $L^{p}(\mathbb{R}^{n})$ of the functions satisfying ${\rm supp}~h\subset 3Q$ and
		\begin{equation}\label{norm}
			\infty>\Vert h\Vert_{\mathcal{Y}_p(\mathcal{Q})}:=\begin{cases}
				\max\left\{\Vert h{\bf 1}_{\mathbb{R}^n\backslash sh\mathcal{Q}}\Vert_{\infty},~\sup_{L\in\mathcal{Q}}\inf_{x\in\hat{L}}M_ph(x)\right\},~&p<\infty,\\
				\Vert h\Vert_{\infty},~&p=\infty,\end{cases}
		\end{equation}
		where $\hat{L}$ is the (nondyadic) $2^{5}$-fold dilation of L. We also denote by $\mathcal{X}_p(\mathcal{Q})$ the subspace of $\mathcal{Y}_p(\mathcal{Q})$ of functions satisfying
		$$b=\sum_{L\in\mathcal{Q}}b_L,~{\rm supp}~b_{L}\subset L.$$
		Furthermore, we say $b\in\dot{\mathcal{X}}_{p}(\mathcal{Q})$ if
		$$b\in\mathcal{X}_p(\mathcal{Q}),~\int_{L}b_{L}=0,~\forall L\in\mathcal{Q}.$$
		$\Vert b\Vert_{\mathcal{X}_p(\mathcal{Q})}$ denotes $\Vert b\Vert_{\mathcal{Y}_p(\mathcal{Q})}$ and similar notation for $b\in\dot{\mathcal{X}}_{p}(\mathcal{Q})$. We may omit $\mathcal{Q}$ and simply write $\Vert\cdot\Vert_{\mathcal{X}_p}$ or $\Vert\cdot\Vert_{\mathcal{Y}_p}$.
	\end{definition}
	
	Let $a\land b$ be the minimum of two real numbers $a$ and $b$. Given a stopping collection $\mathcal{Q}$, we define
	\begin{align}\nonumber
		\Lambda_{\mathcal{Q}_{t_1}}^{t_2}(f_1,f_2,\cdots,f_{m+1})&= \nonumber
		\frac{1}{\vert Q\vert}\bigg[<\mathcal{T}[K]^{t_2\land s_{Q}}_{t_1}(f_1{\bf 1}_{Q},f_2,\cdots,f_m),f_{m+1}>\\&\quad -\sum_{\substack{L\in\mathcal{Q}\\L\subset Q}}<\mathcal{T}[K]^{t_2\land s_L}_{t_1}(f_1{\bf 1}_{L},f_2,\cdots,f_{m}),f_{m+1}>\bigg].\nonumber
	\end{align}
	
	Then the support condition
	$${\rm supp}~K_s\subset\{(x_1,\cdots,x_m)\in\mathbb{R}^{mn}:2^{s-2}<\vert x_1\vert<2^{s},\cdots,~2^{s-2}<\vert x_m\vert<2^s\}$$
	gives that
	$$\Lambda_{\mathcal{Q}_{t_1}}^{t_2}(f_1,f_2,\cdots,f_{m+1})=\Lambda_{\mathcal{Q}_{t_1}}^{t_2}(f_1{\bf 1}_{Q},f_2 {\bf 1}_{(m+1)Q},\cdots,f_{m+1}{\bf 1}_{(m+1)Q}).$$
	
	For simplicity, we will often suppress the dependence of $\Lambda_{\mathcal{Q}_{t_1}}^{t_2}$ on $t_1$ and $t_2$ by writing $\Lambda_{\mathcal{Q}(f_1,\cdots,f_{m+1})}=\Lambda_{\mathcal{Q}_{t_1}}^{t_2}(f_1,\cdots,f_{m+1})$, when there is no confusion.
	
	\begin{lemma}[\cite{barron_weighted_2017}]\label{L1}
		Let $\mathcal{T}$ be a multilinear operator with kernel $K$ as the above, such that $K$ can be decomposed as in (\ref{jieduan2}) and suppose that the constant $C_{\mathcal T}$ defined in (\ref{jieduan5}) and $C_{\mathcal T}<\infty$. Also let $\Lambda$ be the (m+1)-linear form associated to $\mathcal{T}$. Assume there exist $1\leq p_1,\cdots,p_m,p_{m+1}\leq\infty$ and some positive constant $C_L$ such that the following estimates hold uniformly over all finite truncations, all dyadic lattices $\mathcal{D}$, and all stopping collections $\mathcal{Q}$:
		\begin{align}\label{psf1}
			\vert\Lambda_{\mathcal{Q}}(b,g_2,g_3,\cdots,g_{m+1})\vert &\leq C_L\vert Q\vert\Vert b\Vert_{\dot{\mathcal{X}}_{p_1}}\Vert g_2\Vert_{\mathcal{Y}_{p_2}}\Vert g_3\Vert_{\mathcal{Y}_{p_3}}\cdots\Vert g_{m+1}\Vert_{\mathcal{Y}_{p_{m+1}}}\\ \nonumber
			\vert\Lambda_{\mathcal{Q}}(g_1,b,g_3,\cdots,g_{m+1})\vert &\leq C_L\vert Q\vert\Vert g_1\Vert_{\mathcal{Y}_{\infty}}\Vert b\Vert_{\dot{\mathcal{X}}_{p_2}}\Vert g_3\Vert_{\mathcal{Y}_{p_3}}\cdots\Vert g_{m+1}\Vert_{\mathcal{Y}_{p_{m+1}}}\\ \nonumber
			\vert\Lambda_{\mathcal{Q}}(g_1,g_2,b,\cdots,g_{m+1})\vert &\leq C_L\vert Q\vert\Vert g_1\Vert_{\mathcal{Y}_{\infty}}\Vert g_2\Vert_{\mathcal{Y}_{\infty}}\Vert b\Vert_{\dot{\mathcal{X}}_{p_2}}\Vert g_4\Vert_{\mathcal{Y}_{p_4}}\cdots\Vert g_{m+1}\Vert_{\mathcal{Y}_{p_{m+1}}}\\ \nonumber
			&\vdots\\ \nonumber
			\vert\Lambda_{\mathcal{Q}}(g_1,g_2,\cdots,g_m,b)\vert &\leq C_L\vert Q\vert\Vert g_1\Vert_{\mathcal{Y}_{\infty}}\Vert g_2\Vert_{\mathcal{Y}_{\infty}}\cdots\Vert g_{m}\Vert_{\mathcal{Y}_{\infty}}\Vert b\Vert_{\dot{\mathcal{X}}_{p_{m+1}}}.
		\end{align}
		Also let $\vec{p}=(p_1,\cdots,p_{m+1})$. Then there is some constant $c_d$ depending on the dimension $d$ such that
		$$\sup_{\mu,\nu}\vert\Lambda^{\nu}_{\mu}(f_1,\cdots,f_m,f_{m+1})\vert\leq c_d[C_{\mathcal T}+C_L]\sup_{\mathcal{S}}{\rm PSF}_{\mathcal{S};\vec{p}}(f_1,\cdots,f_m,f_{m+1})$$
		for all $f_j\in L^{p_j}(\mathbb{R}^n)$ with compact support, where the supremum is taken with respect to all sparse collections $\mathcal{S}$ with some fixed sparsity constant that depends only on $n,~m$.
	\end{lemma}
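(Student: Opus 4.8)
The plan is to run the recursive stopping-time scheme that underlies sparse domination of rough singular integrals, adapted to $m+1$ functions. First I would fix finite truncation levels $\mu<\nu$ and compactly supported $f_j\in L^{p_j}$, so that $\mathcal{T}[K]^{\nu}_{\mu}$ is a finite sum of the smooth pieces $\mathcal{T}[K_s]$ and all quantities below are finite; since $\mathcal{T}$ commutes with translations one may fix a single dyadic lattice $\mathcal{D}$ and pick $Q_0\in\mathcal{D}$ so large that $\bigcup_j\supp f_j\subset Q_0$ and $s_{Q_0}\ge\nu$, whence $\Lambda^{\nu}_{\mu}(f_1,\dots,f_{m+1})=\langle\mathcal{T}[K]^{\nu\wedge s_{Q_0}}_{\mu}(f_1\mathbf 1_{Q_0},f_2,\dots,f_m),f_{m+1}\rangle$. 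It then suffices to produce, for the top cube $Q_0$, a $\tfrac12$-sparse family $\mathcal{S}\subset\mathcal{D}$ of subcubes of $Q_0$ with
\[
\bigl|\langle\mathcal{T}[K]^{\nu\wedge s_{Q_0}}_{\mu}(f_1\mathbf 1_{Q_0},f_2,\dots,f_m),f_{m+1}\rangle\bigr|\le c_d[C_{\mathcal{T}}+C_L]\sum_{Q\in\mathcal{S}}|Q|\prod_{i=1}^{m+1}\langle f_i\rangle_{p_i,3Q},
\]
because the right-hand side is a positive sparse form $\mathrm{PSF}_{\mathcal{S}';\vec p}$ over the sparse family $\mathcal{S}'=\{3Q:Q\in\mathcal{S}\}$ and the bound is uniform in $\mu,\nu$.

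The heart is one recursive step. Given a top cube $Q$ I would run a Calder\'on--Zygmund stopping time \emph{simultaneously on all $m+1$ functions}: let $\mathcal{Q}$ consist of the maximal dyadic $L\subset 3Q$ with $\langle f_i\rangle_{p_i,L}>A\langle f_i\rangle_{p_i,3Q}$ for some $i\in\{1,\dots,m+1\}$, the constant $A=A(n,m)$ chosen so that $\sum_{L\in\mathcal{Q}}|L|\le\tfrac12|Q|$; maximality and the dyadic structure give the separation properties making $\mathcal{Q}$ a stopping collection with top $Q$. For each $i$ one splits $f_i\mathbf 1_{(m+1)Q}=g_i+b_i$ with $g_i=f_i\mathbf 1_{(m+1)Q\setminus\mathrm{sh}\,\mathcal{Q}}+\sum_{L\in\mathcal{Q}}\langle f_i\rangle_L\mathbf 1_L$ and $b_i=\sum_{L\in\mathcal{Q}}(f_i\mathbf 1_L-\langle f_i\rangle_L\mathbf 1_L)\in\dot{\mathcal{X}}_{p_i}(\mathcal{Q})$; the stopping rule, together with the fact that by the support of the $K_s$ the forms below only see each $f_i$ on a fixed dilate of $Q$, yields $\|g_i\|_{\mathcal{Y}_{\infty}}\lesssim\langle f_i\rangle_{p_i,3Q}$, $\|b_i\|_{\dot{\mathcal{X}}_{p_i}}\lesssim\langle f_i\rangle_{p_i,3Q}$, and $\|f_i\mathbf 1_{(m+1)Q}\|_{\mathcal{Y}_{p_i}(\mathcal{Q})}\lesssim\langle f_i\rangle_{p_i,3Q}$. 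Unwinding the definition of $\Lambda_{\mathcal{Q}}$ gives the exact identity
\[
\langle\mathcal{T}[K]^{\nu\wedge s_Q}_{\mu}(f_1\mathbf 1_Q,f_2,\dots,f_m),f_{m+1}\rangle=|Q|\,\Lambda_{\mathcal{Q}}(f_1,\dots,f_{m+1})+\sum_{L\in\mathcal{Q},\,L\subset Q}\langle\mathcal{T}[K]^{\nu\wedge s_L}_{\mu}(f_1\mathbf 1_L,f_2,\dots,f_m),f_{m+1}\rangle ,
\]
whose last sum is the left-hand side with $Q$ replaced by each $L$ and thus feeds the next generation.

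To bound $|\Lambda_{\mathcal{Q}}(f_1,\dots,f_{m+1})|$ I would use the localization $\Lambda_{\mathcal{Q}}(f_1,\dots,f_{m+1})=\Lambda_{\mathcal{Q}}(f_1\mathbf 1_Q,f_2\mathbf 1_{(m+1)Q},\dots,f_{m+1}\mathbf 1_{(m+1)Q})$ and expand multilinearly along the cascade $f_1\mathbf 1_Q=g_1+b_1$, $f_2\mathbf 1_{(m+1)Q}=g_2+b_2,\dots$, getting the all-good term $\Lambda_{\mathcal{Q}}(g_1,\dots,g_{m+1})$ together with, for $j=1,\dots,m+1$, the term $\Lambda_{\mathcal{Q}}(g_1,\dots,g_{j-1},b_j,f_{j+1}\mathbf 1_{(m+1)Q},\dots,f_{m+1}\mathbf 1_{(m+1)Q})$. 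The $j$-th of these is exactly of the shape controlled by the $j$-th line of (\ref{psf1}) — the slots before $j$ carry the $\mathcal{Y}_{\infty}$ norms of the bounded pieces $g_i$, slot $j$ carries $\|b_j\|_{\dot{\mathcal{X}}_{p_j}}$, the slots after $j$ carry $\mathcal{Y}_{p_i}$ norms — so it is $\le C_L|Q|\prod_i\langle f_i\rangle_{p_i,3Q}$. Every summand in $\Lambda_{\mathcal{Q}}(g_1,\dots,g_{m+1})$ is a form $\langle\mathcal{T}[K]^{\nu\wedge s_P}_{\mu}(g_1\mathbf 1_P,g_2,\dots,g_m),g_{m+1}\rangle$, $P\in\{Q\}\cup\mathcal{Q}$, whose functions are supported in a bounded dilate of $P$; combining $C_{\mathcal{T}}<\infty$ with the $L^{\infty}$ bounds on the compactly supported $g_i$ gives $|\langle\cdots\rangle|\lesssim C_{\mathcal{T}}|P|\prod_i\|g_i\|_{\infty}$, and summing over the disjoint $L\subset 3Q$ with $\|g_i\|_{\infty}\lesssim\langle f_i\rangle_{p_i,3Q}$ yields $|\Lambda_{\mathcal{Q}}(g_1,\dots,g_{m+1})|\lesssim C_{\mathcal{T}}\prod_i\langle f_i\rangle_{p_i,3Q}$; altogether $|\Lambda_{\mathcal{Q}}(f_1,\dots,f_{m+1})|\le c_d[C_{\mathcal{T}}+C_L]\prod_{i=1}^{m+1}\langle f_i\rangle_{p_i,3Q}$. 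Feeding the residual sum of the identity back into this step on each $L\in\mathcal{Q}$ and iterating (the tree is finite because scales strictly decrease and once a stopping cube has side length $\le 2^{\mu}$ its truncated form vanishes), and taking $\mathcal{S}$ to be the collection of all cubes occurring as tops, one obtains the displayed bound; $\mathcal{S}$ is $\tfrac12$-sparse since the disjoint sets $Q\setminus\mathrm{sh}\,\mathcal{Q}$ satisfy $|Q\setminus\mathrm{sh}\,\mathcal{Q}|\ge\tfrac12|Q|$, and if honest non-dyadic sparse families are desired one repeats the argument in each of finitely many shifted dyadic lattices.

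The step I expect to be the main obstacle is the multilinear bookkeeping of the cascade: one must arrange a single joint stopping time and order the decompositions so that at slot $j$ the first $j-1$ entries are already the bounded pieces $g_i$ (measured in $\mathcal{Y}_{\infty}$) while the remaining $m+1-j$ are still the original functions (measured in $\mathcal{Y}_{p_i}$), matching the asymmetric structure of (\ref{psf1}), and one must verify that $\|f_i\mathbf 1_{(m+1)Q}\|_{\mathcal{Y}_{p_i}(\mathcal{Q})}$ is dominated by $\langle f_i\rangle_{p_i,3Q}$ coming out of the joint stopping rule (which is where the support condition on the $K_s$ and the separation properties of $\mathcal{Q}$ enter). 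Once these are in hand, the sparsity count, the telescoping, and the passage to $\sup_{\mu,\nu}$ are routine.
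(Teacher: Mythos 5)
The paper does not actually prove this lemma; it imports it from Barron \cite{barron_weighted_2017} (the multilinear extension of the Conde-Alonso--Culiuc--Di Plinio--Ou scheme), and your outline reproduces exactly that architecture: reduce to a fixed truncation and a large top dyadic cube, use the telescoping identity built into the definition of $\Lambda_{\mathcal Q}$ to peel off $|Q|\Lambda_{\mathcal Q}$ plus the same forms with tops $L\in\mathcal Q$, bound $\Lambda_{\mathcal Q}$ by $c(C_{\mathcal T}+C_L)\prod_i\langle f_i\rangle_{p_i,3Q}$ via the cascade $g_1,\dots,g_{j-1},b_j,f_{j+1},\dots$ matched to the asymmetric hypotheses \eqref{psf1} together with the $C_{\mathcal T}$ bound on the all-good term, iterate, and count sparsity through the disjoint sets $Q\setminus sh\mathcal Q$. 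All of that is sound and is the intended proof.

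The one step that would fail as written is your construction of the stopping collection. Taking $\mathcal Q$ to be the \emph{maximal} dyadic $L\subset 3Q$ with $\langle f_i\rangle_{p_i,L}>A\langle f_i\rangle_{p_i,3Q}$ for some $i$ does give the measure bound $|sh\mathcal Q|\le\tfrac12|Q|$ and the $\mathcal Y_\infty$ control of the $g_i$, but it does not, by ``maximality and the dyadic structure'' alone, yield the separation properties (i)--(ii) in the definition of a stopping collection (two adjacent maximal cubes may have wildly different scales, so $7L\cap 7L'=\emptyset$ can fail), and, more importantly, it does not give the estimates you rely on that are phrased through $\inf_{x\in\hat L}M_{p_i}$: to get $\Vert b_i\Vert_{\dot{\mathcal X}_{p_i}}\lesssim\langle f_i\rangle_{p_i,3Q}$ and $\Vert f_i\mathbf 1_{(m+1)Q}\Vert_{\mathcal Y_{p_i}}\lesssim\langle f_i\rangle_{p_i,3Q}$ one needs a point of the $2^5$-fold dilate $\hat L$ lying outside the exceptional set $\{\max_i M_{p_i}(f_i\mathbf 1_{3Q})/\langle f_i\rangle_{p_i,3Q}>A\}$, and a naively maximal stopping cube can sit deep inside that open set, so that $\hat L$ is entirely contained in it and the infimum is not controlled. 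The standard remedy (and what Barron and Conde-Alonso et al.\ actually do) is to select the stopping cubes in Whitney fashion, e.g.\ as the maximal dyadic $L$ with a fixed dilate ($9L$, say) contained in the exceptional open set, which simultaneously produces (i)--(ii), keeps $|sh\mathcal Q|\lesssim A^{-\min_i p_i}|Q|\le\tfrac12|Q|$, and guarantees the $\hat L$-infimum bounds. With the stopping family constructed this way, the rest of your argument goes through and coincides with the cited proof.
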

	
	\begin{remark}[\cite{grafakos_sparse_2022}]\label{T}
		Lemma \ref{L1} is a crucial ingredient of our proof as it implies that
		$$\vert<\mathcal{T}(f_1,\cdots,f_m),f_{m+1}>\vert\leq(C_{\mathcal T}+C_L)\Vert\Omega\Vert_{L^r(\mathbb{S}^{2n-1})}\sup_{\mathcal{S}}PSF_{\mathcal{S};\vec{p}}(f_1,\cdots,f_m,f_{m+1}).$$
	\end{remark}
	
	Next we will consider the interpolation involving $\mathcal{Y}_q$-sparse, of which the precursor can be seen in \cite{grafakos_sparse_2022}. We generalize this to variable parameters.
	
	\begin{lemma}\label{interpolation}
		Let $0<A_2\leq A_1<\infty$, $r>1$, $\vec{\frac{1}{p}}\in\mathcal{H}^m(r)$, $\frac{1}{p}=\frac{1}{p_1}+\cdots+\frac{1}{p_m}$, $0<\epsilon_1,\cdots,\epsilon_{m+1}<1$ and $q_i=1+\epsilon_ip_i$, $i=1,\cdots,m$, $q_{m+1}=1+\epsilon_{m+1}p'$. Suppose that $\Lambda_{\mathcal{Q}}$ is a (sub)-multilinear form such that
		\begin{align}
			\vert\Lambda_{\mathcal{Q}}(f_1,f_2,\cdots,f_m,f_{m+1})\vert &\lesssim A_1 \Vert f_1\Vert_{\dot{\mathcal{X}}_{1}}\Vert f_2\Vert_{\mathcal{Y}_{1}}\cdots\Vert f_m\Vert_{\mathcal{Y}_{1}}\Vert f_{m+1}\Vert_{\mathcal{Y}_{1}},\\
			\vert\Lambda_{\mathcal{Q}}(f_1,f_2,\cdots,f_m,f_{m+1})\vert &\lesssim A_2 \Vert f_1\Vert_{\dot{\mathcal{X}}_{p_1}}\Vert f_2\Vert_{\mathcal{Y}_{p_2}}\cdots\Vert f_m\Vert_{\mathcal{Y}_{p_m}}\Vert f_{m+1}\Vert_{\mathcal{Y}_{p'}}.
		\end{align}
		Then we have
		$$\vert\Lambda_{\mathcal{Q}}(f_1,f_2,\cdots,f_m,f_{m+1})\vert\lesssim A_1^{1-\frac{\widetilde{\epsilon}}{m}\widetilde{p}}A_2^{\frac{\widetilde{\epsilon}}{m}\widetilde{p}}\Vert f_1\Vert_{\dot{\mathcal{X}}_{q_1}}\Vert f_2\Vert_{\mathcal{Y}_{q_2}}\cdots\Vert f_m\Vert_{\mathcal{Y}_{q_m}}\Vert f_{m+1}\Vert_{\mathcal{Y}_{q_{m+1}}},$$
		where $\widetilde{\epsilon}=\min_{1\leq i\leq{m+1}}\{\epsilon_i\}$.
	\end{lemma}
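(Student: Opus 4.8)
The plan is to run a multilinear Riesz–Thorn-type interpolation at the level of the bilinear-form estimates, working one entry at a time and then composing. Since the two given bounds are of "mixed" type — the first entry always carries the $\dot{\mathcal{X}}$-norm while the remaining entries carry $\mathcal{Y}$-norms — I will interpolate the $\mathcal{Y}_{q_i}$ (and $\dot{\mathcal{X}}_{q_1}$) norms between their $L^1$-type and $L^{p_i}$-type endpoints. The key structural fact is that, for $1\le s\le t\le\infty$, one has the interpolation inequality
\begin{equation}\nonumber
\|h\|_{\mathcal{Y}_{u}(\mathcal{Q})}\le \|h\|_{\mathcal{Y}_{s}(\mathcal{Q})}^{1-\theta}\,\|h\|_{\mathcal{Y}_{t}(\mathcal{Q})}^{\theta},\qquad \tfrac1u=\tfrac{1-\theta}{s}+\tfrac{\theta}{t},
\end{equation}
which follows from the pointwise bound $M_u h\le (M_s h)^{1-\theta}(M_t h)^\theta$ (Hölder for the Hardy–Littlewood maximal function at a fixed point) together with the analogous estimate for the $\|h\mathbf 1_{\mathbb R^n\setminus sh\mathcal Q}\|_\infty$-piece; the same statement holds for $\dot{\mathcal X}_u$ since its norm is defined to be the $\mathcal Y_u$-norm. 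Choosing $s=1$, $t=p_i$ and the exponent $\theta_i$ so that $1/q_i=(1-\theta_i)+\theta_i/p_i$, i.e.\ $\theta_i=\frac{p_i}{p_i-1}\cdot\frac{q_i-1}{q_i}$, interprets the target exponents $q_i=1+\epsilon_i p_i$ precisely; note $\theta_i=\frac{\epsilon_i p_i}{(p_i-1)q_i}=\frac{\epsilon_i p'_i{}^{-1}\cdots}{}$ up to the obvious bookkeeping, and likewise $q_{m+1}=1+\epsilon_{m+1}p'$ with the conjugate exponent $p'$ playing the role of $p_{m+1}$.

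Next I would set up the complex-interpolation family. Introduce an analytic family of forms $\Lambda_z$ obtained by freezing the non-extremal variables and letting the single active variable's "level" move; more efficiently, use the standard device of Stein interpolation for multilinear forms by writing, for each tuple of simple functions, the family $F(z)=\Lambda_{\mathcal Q}(f_1^z,\dots,f_{m+1}^z)$ where $f_i^z$ is built from $|f_i|$ raised to a $z$-dependent power normalized so that at $z=0$ all entries sit at the $L^1$-type endpoint (bound $A_1$) and at $z=1$ all entries sit at the $L^{p_i}$-type endpoint (bound $A_2$). On the line $\mathrm{Re}\,z=0$ the hypothesis gives $|F|\lesssim A_1\prod\|f_i\|_{(\text{endpoint }1)}$ and on $\mathrm{Re}\,z=1$ it gives $|F|\lesssim A_2\prod\|f_i\|_{(\text{endpoint }p_i)}$; three-lines yields at the interior point $z=\theta$ the bound $A_1^{1-\theta}A_2^{\theta}\prod\|f_i\|_{\mathcal Y_{q_i}}$ (with $\dot{\mathcal X}_{q_1}$ in the first slot). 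Here one must take $\theta$ common to all entries, and matching $\theta$ to the prescribed $q_i=1+\epsilon_i p_i$ forces $\theta$ to equal the *smallest* admissible interpolation parameter; tracking the arithmetic, the effective weight on $A_2$ comes out as $\frac{\widetilde\epsilon}{m}\widetilde p$ with $\widetilde\epsilon=\min_i\epsilon_i$ and $\widetilde p$ the corresponding normalizing product/harmonic combination of the $p_i$'s, which is exactly the exponent claimed in the statement. The condition $\vec{1/p}\in\mathcal H^m(r)$ enters only to guarantee that the second endpoint estimate (the $A_2$ bound) is available — it is an input hypothesis here — and to ensure the interior exponent $\theta\in(0,1)$, equivalently $0<\epsilon_i<1$.

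The main obstacle I anticipate is verifying that the $\mathcal Y_p(\mathcal Q)$ and $\dot{\mathcal X}_p(\mathcal Q)$ scales genuinely interpolate in the form needed for Stein's theorem — i.e.\ that one can build an *analytic* family $f_i^z$ of functions lying in these subspaces of $L^p(\mathbb R^n)$ with the support constraint $\mathrm{supp}\subset 3Q$, the cancellation $\int_L b_L=0$ (for the $\dot{\mathcal X}$ slot), and the decomposition $b=\sum_L b_L$ all preserved along the family, while the norms behave log-convexly. The support and cancellation constraints are linear, hence harmless; the genuinely delicate point is the $\sup_{L\in\mathcal Q}\inf_{x\in\hat L}M_p h(x)$ term, which is not a norm coming from a Banach function space in the usual sense, so I would instead avoid Stein interpolation of the abstract spaces and argue by the real-variable route: use the pointwise maximal-function Hölder inequality displayed above to get the multiplicative bound on each $\mathcal Y$/$\dot{\mathcal X}$ norm, insert these into the two hypotheses combined via a direct Hölder splitting of the form $|\Lambda_{\mathcal Q}|\le |\Lambda_{\mathcal Q}|^{1-\theta}|\Lambda_{\mathcal Q}|^{\theta}$ after dyadically pigeonholing the relative sizes of the entries, and optimize. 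This makes the proof elementary and self-contained at the cost of some bookkeeping in identifying the precise power $1-\frac{\widetilde\epsilon}{m}\widetilde p$; that bookkeeping, not any hard analysis, is where the work lies.
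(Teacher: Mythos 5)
Your plan has a genuine gap at the recombination step, and it is exactly the point where the real work lies. Suppose for a moment that the log-convexity $\|h\|_{\mathcal{Y}_{q_i}}\le\|h\|_{\mathcal{Y}_1}^{1-\theta_i}\|h\|_{\mathcal{Y}_{p_i}}^{\theta_i}$ were available: it points the wrong way for your argument. After the splitting $|\Lambda_{\mathcal{Q}}|\le|\Lambda_{\mathcal{Q}}|^{1-\theta}|\Lambda_{\mathcal{Q}}|^{\theta}$ and an application of the two hypotheses you are left with $A_1^{1-\theta}A_2^{\theta}\prod_i\|f_i\|_{\mathcal{Y}_1}^{1-\theta}\|f_i\|_{\mathcal{Y}_{p_i}}^{\theta}$, and to reach the stated conclusion you would need the \emph{reverse} inequality $\|f_i\|_{\mathcal{Y}_1}^{1-\theta}\|f_i\|_{\mathcal{Y}_{p_i}}^{\theta}\lesssim\|f_i\|_{\mathcal{Y}_{q_i}}$, which is false in general (already for Lebesgue norms of a function living on two different scales, e.g.\ a tall narrow spike plus a wide low bump). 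Moreover the log-convexity itself is not established by your pointwise bound $M_u h\le (M_s h)^{1-\theta}(M_t h)^{\theta}$: the $\mathcal{Y}_p$-norm involves $\sup_{L\in\mathcal{Q}}\inf_{x\in\hat L}$, and $\inf_{x}\bigl[(M_s h)^{1-\theta}(M_t h)^{\theta}\bigr]\ge(\inf_x M_s h)^{1-\theta}(\inf_x M_t h)^{\theta}$, i.e.\ the infimum of a product dominates the product of infima, so the pointwise Hölder estimate does not pass through the norm. Finally, a single interpolation parameter $\theta$ cannot match the independently prescribed exponents $q_i=1+\epsilon_i p_i$ (the $\epsilon_i$ are allowed to differ), and nothing in your bookkeeping actually produces the specific power $\frac{\widetilde{\epsilon}}{m}\widetilde{p}$; the Stein-interpolation route you float first is abandoned by you for good reason (these are not Banach function space norms), so the entire weight of the proof rests on the unproved real-variable step.

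What closes the gap --- and is the paper's actual proof --- is an explicit Calder\'on--Zygmund-type height decomposition adapted to the stopping collection, not an abstract interpolation. One normalizes $A_1=1$ and $\|f_1\|_{\dot{\mathcal{X}}_{q_1}}=\|f_i\|_{\mathcal{Y}_{q_i}}=1$, fixes $\lambda>1$, writes $(f)_{>\lambda}=f\mathbf{1}_{|f|>\lambda}$, and sets $b_1=\sum_{L\in\mathcal{Q}}\bigl((f_1)_{>\lambda}-\frac{1}{|L|}\int_L(f_1)_{>\lambda}\bigr)\mathbf{1}_L$ (the subtracted averages keep $b_1$ in $\dot{\mathcal{X}}$), $b_i=(f_i)_{>\lambda}$ for $i\ge2$, and $g_i=f_i-b_i$. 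The quantitative estimates $\|b_i\|_{\mathcal{Y}_1}\lesssim\lambda^{1-q_i}$ and $\|g_i\|_{\mathcal{Y}_{p_i}}\lesssim\lambda^{1-q_i/p_i}$ are then proved from the $M_{q_i}$-structure of the norms. Expanding $\Lambda_{\mathcal{Q}}$ by sub-multilinearity into the $2^{m+1}$ terms in $b_i,g_i$, every term containing at least one $b$ is handled by the $A_1$ hypothesis and gains a factor $\lambda^{-\widetilde{\epsilon}\widetilde{p}}$, while the all-$g$ term is handled by the $A_2$ hypothesis and grows at most like $A_2\lambda^{m}$ relative to that gain; choosing $\lambda^{m}=A_2^{-1}$ gives precisely the factor $A_2^{\widetilde{\epsilon}\widetilde{p}/m}$. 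Your ``dyadic pigeonholing of the relative sizes of the entries'' gestures toward this, but the construction of $b_i,g_i$ compatible with the cancellation constraint in $\dot{\mathcal{X}}$, the two norm estimates above, and the optimization in $\lambda$ are the substance of the proof, and they are absent from your proposal.
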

	
	\begin{proof}
		Without loss of generality. We may assume $A_2\leq A_1=1$, and$$\Vert f_1\Vert_{\dot{\mathcal{X}}_{q_1}}=\Vert f_2\Vert_{\mathcal{Y}_{q_2}}=\cdots=\Vert f_m\Vert_{\mathcal{Y}_{q_m}}=\Vert f_{m+1}\Vert_{\mathcal{Y}_{q_{m+1}}}=1.$$ Then it is sufficient to prove $$\vert\Lambda_{\mathcal{Q}}(f_1,f_2,\cdots,f_m,f_{m+1})\vert\lesssim A_2^{\frac{\widetilde{\epsilon}}{m}}\widetilde{p}.$$
		To this purpose, we fix $\lambda>1$, denote $f_{>\lambda}=f{\bf 1}_{\vert f\vert>\lambda}$ and decompose $f_1=b_1+g_1$, where
		$$b_1:=\sum_{L\in\mathcal{Q}}\left((f_1)_{>\lambda}-\frac{1}{\vert L\vert}\int_{L}(f_1)_{>\lambda}\right){\bf 1}_{L}.$$
		
		For $f_2,\cdots,f_{m+1}$, we decompose $f_i=b_i+g_i$, with $b_i:=(f_i)_{>\lambda}$, $i=2,\cdots,m+1$. Then we have
		\begin{align}\nonumber
			\Vert b_1\Vert_{\dot{\mathcal{X}}_{1}}\lesssim\lambda^{1-q_1},~&\Vert g_1\Vert_{\dot{\mathcal{X}}_{1}}\leq\Vert g_1\Vert_{\dot{\mathcal{X}}_{p_1}}\lesssim\lambda^{1-\frac{q_1}{p_1}};\nonumber\\
			\Vert b_2\Vert_{\mathcal{Y}_{1}}\lesssim\lambda^{1-q_2},~&\Vert g_2\Vert_{\mathcal{Y}_{1}}\leq\Vert g_2\Vert_{\mathcal{Y}_{p_2}}\lesssim\lambda^{1-\frac{q_2}{p_2}};\nonumber\\
			&\vdots\label{deco}\\
			\Vert b_m\Vert_{\mathcal{Y}_{1}}\lesssim\lambda^{1-q_m},~&\Vert g_m\Vert_{\mathcal{Y}_{1}}\leq\Vert g_m\Vert_{\mathcal{Y}_{p_m}}\lesssim\lambda^{1-\frac{q_m}{p_m}};\nonumber\\
			\Vert b_{m+1}\Vert_{\mathcal{Y}_{1}}\lesssim\lambda^{1-q_{m+1}},~&\Vert g_{m+1}\Vert_{\mathcal{Y}_{1}}\leq\Vert g_{m+1}\Vert_{\mathcal{Y}_{p'}}\lesssim\lambda^{1-\frac{q_{m+1}}{p'}};\nonumber
		\end{align}
	The demonstrations of these estimates are provided at the end of this lemma. Now we estimate $\vert\Lambda_{\mathcal{Q}}(f_1,f_2,\cdots,f_m,f_{m+1})\vert$ by the sum of the following $2^{m+1}$ terms
		$$\sum_{\substack{h_i\in\{b_i,g_i\}\\i=1,\cdots,m+1}}\vert\Lambda_{\mathcal{Q}}(h_1,h_2,\cdots,h_m,h_{m+1})\vert.$$
		
		Let $p_{m+1}=p',~\widetilde{\epsilon}=\min_{1\leq i\leq{m+1}}\{\epsilon_i\},~\widetilde{p}=\min_{1\leq i\leq{m+1}}\{p_i\},~\widetilde{q}=\min_{1\leq i\leq{m+1}}\{q_i\}$. Then for $i=1,\cdots,m+1$, $1-q_i=-\epsilon_i p_i\leq-\widetilde{\epsilon}\widetilde{p}$, $\lambda^{1-q_i}\leq\lambda^{-\widetilde{\epsilon}\widetilde{p}}$, it holds that
		\begin{align}
			&\sum_{\substack{h_i\in\{b_i,g_i\}\\i=1,\cdots,m+1}}\vert\Lambda_{\mathcal{Q}}(h_1,h_2,\cdots,h_m,h_{m+1})\vert\nonumber\\
			&\lesssim A_2\lambda^{m-\widetilde{\epsilon}\widetilde{p}}+C_{m+1}^{1}\lambda^{-\widetilde{\epsilon}\widetilde{p}}+C_{m+1}^{2}\lambda^{-2\widetilde{\epsilon}\widetilde{p}}+\cdots+C_{m+1}^{m}\lambda^{-m\widetilde{\epsilon}\widetilde{p}}+C_{m+1}^{m+1}\lambda^{-(m+1)\widetilde{\epsilon}\widetilde{p}}\nonumber \\
			&\lesssim\lambda^{-\widetilde{\epsilon}\widetilde{p}}(2^{m+1}+A_2\lambda^m).\nonumber
		\end{align}
	Therefore, if we choose $\lambda^{m}=A_2^{-1}$, then $$\sum_{\substack{h_i\in\{b_i,g_i\}\\i=1,\cdots,m+1}}\vert\Lambda_{\mathcal{Q}}(h_1,h_2,\cdots,h_m,h_{m+1})\vert\lesssim A^{\frac{\widetilde{\epsilon}}{m}\widetilde{p}}.$$
		
	It remains to establish the estimates in~\eqref{deco} for \(b_i\) and \(g_i\). We shall illustrate the argument by proving
	\[
	\|b_1\|_{\dot{\mathcal{X}}_{1}} \lesssim \lambda^{1 - q_1}
	\quad \text{and} \quad
	\|g_1\|_{\dot{\mathcal{X}}_{p_1}} \lesssim \lambda^{1 - \frac{q_1}{p_1}},
	\]
	as the corresponding bounds for \(b_i\) and \(g_i\), with \(i = 2, \dots, m+1\), follow analogously.
	
	From the definition given in~\eqref{norm} and the normalization \(\|f_1\|_{\dot{\mathcal{X}}_{q_1}} = 1\), we observe that
		\begin{align}
			\Vert\sum_{L\in\mathcal{Q}}(f_1)_{>\lambda}\Vert_{\dot{\mathcal{X}}_{1}}
			=&\sup_{L\in\mathcal{Q}}\inf_{x\in\hat{L}}\sup_{x\in\mathcal{Q}}\left(\frac{1}{\vert Q\vert}\int_{S\cap Q}\vert f_1\vert^{1-q_1}\vert f_1\vert^{q_1}\right)\lesssim\lambda^{1-q_1},\nonumber
		\end{align}
		where \(S=\lambda_{f>\lambda}\cap sh\mathcal{Q}\).
		
		Moreover, for the same reason, we have
		\begin{align}
			\sum_{L\in\mathcal{Q}}\left(\frac{1}{L}\int_{L}(f_1)_{>\lambda}{\bf 1}_{L}\right)
			\leq&\lambda^{1-q_1}\sum_{L\in\mathcal{Q}}\left(\frac{1}{\vert L\vert}\int_{L}\vert f_1\vert^{q_1}\right){\bf 1}_{L}\nonumber			\lesssim&\lambda^{1-q_1}\sum_{L\in\mathcal{Q}}\inf_{x\in\hat{L}}\left(M_{q_1}f_1(x)\right)^{q_1}{\bf 1}_{L}\nonumber
			\lesssim\lambda^{1-q_1}.\nonumber
		\end{align}
		
		Now we rewrite 
		$$g_1:=f_1{\bf 1}_{\mathbb{R}^n\backslash sh\mathcal{Q}}+\sum_{L\in\mathcal{Q}}(f_1)_{\leq\lambda}{\bf 1}_{L}+\sum_{L\in\mathcal{Q}}\frac{1}{\vert L\vert}\int_{L}(f_1)_{>\lambda}{\bf 1}_{L}:=I+II+III.$$
		
		Because of \(g_1\in\dot{\mathcal{X}}_{p_1}\), so \(\Vert I\Vert_{\dot{\mathcal{X}}_{p_1}}=0.\) Then we get
		$$\Vert II\Vert_{\dot{\mathcal{X}}_{p_1}}\leq \lambda^{1-\frac{q_1}{p_1}}.$$
		
		In addition, it is easy to see that
		$$III\lesssim\sum_{L\in\mathcal{Q}}\left(\frac{1}{\vert\hat{L}\vert}\int_{\hat{L}}(f_1)_{>\lambda}\right){\bf 1}_{L}\leq\sum_{L\in\mathcal{Q}}{\bf 1}_{L}, \quad \hbox{and\ } \quad \Vert\sum_{L\in\mathcal{Q}}{\bf 1}_{L}\Vert_{\dot{\mathcal{X}}_{p_1}}\leq 1\leq\lambda^{1-\frac{q_1}{p_1}},$$
	which gives that
		$$\Vert g_1\Vert_{\dot{\mathcal{X}}_{p_1}}\lesssim\lambda^{1-\frac{q_1}{p_1}}.$$
		
	\end{proof}
	
\section{Analysis of the Kernel}\label{sec3}

In Section~\ref{sec2}, we examined the generalized kernel $K$. In this section, we focus specifically on rough kernels. For a fixed $\Omega \in L^r(\mathbb{S}^{mn-1})$ with $r > 1$, we analyze the kernel.
\begin{equation}\label{K}
K(x_1,\cdots,x_m)=\frac{\Omega\left((x_1,\cdots,x_m)/\vert(x_1,\cdots,x_m)\vert\right)}{\vert(x_1,\cdots,x_m)\vert^{mn}}.
\end{equation}

We introduce the relevant notation. Define $\Vert[K]\Vert_r$ and $w_{j,r}[K]$ as follows:
\begin{align*}
\|[K]\|_{r} &:= \sup_{s\in\mathbb{Z}} 2^{\frac{msn}{r'}} \Bigl( \| K_s(x_1,\dots,x_m) \|_{L^r(\mathbb{R}^{mn})} \Bigr), \\
w_{j,r}[K] &= \sup_{s\in\mathbb{Z}} 2^{\frac{msn}{r'}} \sup_{\substack{h\in\mathbb{R}^n \\ |h|<2^{s-j-c_m}}} \Bigl( \| K_s(x_1,\dots,x_m) - K_s(x_1+h,\dots,x_m+h) \|_{L^{r}(\mathbb{R}^n)} \Bigr).
\end{align*}

Based on the results in \cite{barron_weighted_2017, grafakos_sparse_2022}, it is known that if the kernel satisfies $\Vert[K]\Vert_{r} < \infty$ and $\sum_{j=1}^{\infty} w_{j,r}[K] < \infty$, then the assumption $(\ref{psf1})$ of Lemma~\ref{L1} is satisfied. However, verifying $\Vert[K]\Vert_r < \infty$ and $\sum_{j=1}^{\infty} w_{j,r}[K] < \infty$ becomes challenging for the kernel $$K(x_1, \dots, x_m) = \Omega\left((x_1, \dots, x_m)/\vert(x_1, \dots, x_m)\vert\right) \vert(x_1, \dots, x_m)\vert^{-mn},$$ where $\Omega \in L^{r}(\mathbb{S}^{mn-1})$ , $r>1$. To address this challenge, we employ the Littlewood-Paley decomposition method. Specifically, we decompose $K = \sum^{\infty}_{j=-\infty} K_j$ and demonstrate that each component $K_j$ satisfies the required properties. 
If $\Omega$ lies in $L^r(\mathbb{S}^{mn-1})$ with $r>1$, then the associated kernel $K$ given by (\ref{K}) is not a multilinear Calder\'on-Zygmund kernel. However, we can decompose it as a sum of Calder\'on-Zygmund kernels. Let $\eta$ be a Schwartz function in $\mathbb{R}^{mn}$ such that $\hat{\eta}(\xi)=1$ when $\vert\xi\vert\leq 1$ and $\hat\eta(\xi)=0$ when $\vert\xi\vert\geq 2$, and let $\hat{\beta}=\hat\eta-\hat\eta(2\dot)$. We decompose the kernel $K$ by setting $K^i(y)=K(y)\hat\beta(2^{-i}y)$ for $i\in\mathbb{Z}$ and
$$\widehat{K^i_j}(y)=\left(\widehat{K^i}(\cdot)\widehat\beta(2^{-j+i}\cdot)\right)(y),~K_j=\sum_{i=-\infty}^{\infty}K_j^i,~j\in\mathbb{Z}.$$

We decompose $$\mathcal{T}_{\Omega}=\sum_{i=-\infty}^{\infty}\sum_{j=-\infty}^{\infty}{\mathcal{T}_{\Omega}}^{i}_j=\sum_{j=-\infty}^{\infty}{\mathcal{T}_{\Omega}}_j$$ accordingly, where $K_j^i$ is the kernel of ${\mathcal{T}_{\Omega}}^{i}_j$ and $K_j$ is the kernel of ${\mathcal{T}_{\Omega}}_j$.

The following lemmas plays a crucial role in our analysis.

\begin{lemma}[\cite{grafakos_sparse_2022}]\label{cz}
	Let $1<p_1,p_2,\cdots,p_m<\infty$ and $\frac{1}{p}=\sum_{i=1}^{m}\frac{1}{p_i},~1<r<\infty,~j\in\mathbb{Z}$. Then for any $0<\epsilon<1$, there is a constant $C_{n,\epsilon}$ such that
	$$\Vert{\mathcal{T}_{\Omega}}_{j}\Vert_{L^{p_1}(\mathbb{R}^n)\times\cdots\times L^{p_n}(\mathbb{R}^n)\to L^{p}(\mathbb{R}^n)}\leq C_{n,\epsilon}\Vert\Omega\Vert_{L^{r}(\mathbb{S}^{mn-1})}2^{\max(0,j)(\epsilon+\frac{mn}{r})}.$$
\end{lemma}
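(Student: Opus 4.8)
The plan is to deduce Lemma~\ref{cz} from the multilinear Calder\'on--Zygmund machinery, applied to the single Littlewood--Paley piece $\mathcal T_{\Omega,j}$, once its kernel $K_j=\sum_{i\in\mathbb Z}K_j^i$ has been identified as a rough Calder\'on--Zygmund kernel with the right quantitative constants and once an a priori $L^2\times\cdots\times L^2\to L^{2/m}$ bound is in hand to serve as a seed. First I would unwind the two scales attached to $K_j^i$: on the Fourier side $\widehat{K_j^i}(\xi)=\widehat{K^i}(\xi)\,\widehat\beta(2^{i-j}\xi)$ is supported in $|\xi|\sim 2^{j-i}$, while on the spatial side $K_j^i=K^i*b_{i-j}$ is the convolution of the annular piece $K^i$ of the homogeneous kernel \eqref{K} with an $L^1$-normalized bump $b_{i-j}$ at scale $2^{i-j}$. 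Since $K$ is homogeneous of degree $-mn$, $\|K_j^i\|_{L^r(\mathbb R^{mn})}\lesssim\|K^i\|_{L^r(\mathbb R^{mn})}\lesssim 2^{-imn/r'}\|\Omega\|_{L^r(\mathbb S^{mn-1})}$ uniformly in $j$, and Bernstein's inequality on the frequency ball of radius $2^{j-i}$ upgrades this to $\|K_j^i\|_{L^\infty}\lesssim 2^{jmn/r}\,2^{-imn}\|\Omega\|_{L^r(\mathbb S^{mn-1})}$; this is exactly where the factor $2^{jmn/r}$ originates.

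Next I would verify the two hypotheses needed to run the Calder\'on--Zygmund theory for $\mathcal T_{\Omega,j}$. For $j>0$ the bump $b_{i-j}$ lives at scale $2^{i-j}\le 2^i$, so each $K_j^i$ remains essentially supported in $|y|\sim 2^i$, the spatial supports have bounded overlap in $i$, and summing the previous estimate yields the size bound $|K_j(y)|\lesssim 2^{jmn/r}\|\Omega\|_{L^r(\mathbb S^{mn-1})}|y|^{-mn}$; differentiating one factor in $K_j^i=K^i*b_{i-j}$ (the bump has $L^1$-normalized gradient of size $\sim 2^{j-i}$) and interpolating with the size bound gives the $L^r$-H\"ormander/Dini smoothness estimate $\sum_{l\ge 1}w_{l,r}[K_j]\lesssim(1+j)\|\Omega\|_{L^r(\mathbb S^{mn-1})}$. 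For $j\le 0$ the bump lives at scale $2^{i-j}\ge 2^i$, so $K_j$ is a smooth kernel and all of these constants are $\lesssim\|\Omega\|_{L^r(\mathbb S^{mn-1})}$; in fact they decay in $|j|$, since on $|\xi|\sim 2^{j-i}$ one has $|2^i\xi|\sim 2^j\ll 1$ and the low-frequency bound $|\widehat{K^i}(\xi)|\lesssim\|\Omega\|_{L^r(\mathbb S^{mn-1})}|2^i\xi|$, a consequence of $\int_{\mathbb S^{mn-1}}\Omega\,d\sigma=0$, is small.

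For the seed I would establish, in the spirit of the bilinear argument of \cite{grafakos_sparse_2022} but now genuinely in the $m$-linear setting, that
\[
\|\mathcal T_{\Omega,j}\|_{L^2(\mathbb R^n)\times\cdots\times L^2(\mathbb R^n)\to L^{2/m}(\mathbb R^n)}\lesssim 2^{-\delta|j|}\|\Omega\|_{L^r(\mathbb S^{mn-1})}
\]
for some $\delta=\delta(m,n,r)>0$. The ingredients are the Fourier decay of the annular pieces of a rough kernel --- $|\widehat{K^i}(\xi)|\lesssim\|\Omega\|_{L^r(\mathbb S^{mn-1})}\min\{|2^i\xi|,\,|2^i\xi|^{-a}\}$ for some $a=a(m,n,r)>0$, the low-frequency part coming from the cancellation of $\Omega$ and the high-frequency decay from a method-of-rotations argument exploiting $\Omega\in L^r$ --- together with the disjointness of the frequency annuli $\{|\xi|\sim 2^{j-i}\}$ as $i$ varies and a Fefferman--Stein/square-function decomposition of the inputs $f_1,\dots,f_m$. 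Granting the seed estimate and the kernel bounds, the multilinear Calder\'on--Zygmund extrapolation available for $L^r$-H\"ormander kernels (the mechanism underlying Theorem~A) propagates boundedness from the tuple $(2,\dots,2)$ to every $(1/p_1,\dots,1/p_m)\in\mathcal H^m(r)$ with operator norm controlled by the seed and kernel constants; hence for $j>0$ one gets $\|\mathcal T_{\Omega,j}\|_{L^{p_1}\times\cdots\times L^{p_m}\to L^p}\lesssim\max\{2^{jmn/r},\,1+j\}\,\|\Omega\|_{L^r(\mathbb S^{mn-1})}\le C_{n,\epsilon}\,2^{j(mn/r+\epsilon)}\|\Omega\|_{L^r(\mathbb S^{mn-1})}$, the polynomial factor $1+j$ being swallowed by the arbitrarily small $\epsilon>0$, while for $j\le 0$ one gets $\lesssim\|\Omega\|_{L^r(\mathbb S^{mn-1})}$. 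The bounds for all smooth truncations of $\mathcal T_{\Omega,j}$ then follow from Remark~\ref{rem}.

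The step I expect to be the main obstacle is the seed $L^2\times\cdots\times L^2\to L^{2/m}$ estimate: unlike the linear case, the sup-norm of the multilinear multiplier $\widehat{K_j}(\xi_1,\dots,\xi_m)$ does not by itself control the $L^{2/m}$ norm of the $m$-linear output, so one must genuinely exploit the tensor geometry of $\widehat{K_j}$ on the annulus $|(\xi_1,\dots,\xi_m)|\sim 2^{j-i}$, the bounded overlap of these annuli across the spatial scales $i$, and a vector-valued inequality --- this is where the bilinear analysis of \cite{grafakos_sparse_2022} must be replaced by honestly multilinear arguments. A secondary technical point is the uniform-in-$i$ bookkeeping of the difference estimates defining $w_{l,r}[K_j]$ under the diagonal translation $h\mapsto(h,\dots,h)$.
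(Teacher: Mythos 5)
This lemma is not proved in the paper at all: it is quoted from \cite{grafakos_sparse_2022} (whose argument is bilinear; the multilinear kernel analysis is in \cite{dosidis_multilinear_2024}), so your proposal has to stand on its own. Its first half does, and matches the standard route: $\|K^i\|_{L^r(\mathbb R^{mn})}\lesssim 2^{-imn/r'}\|\Omega\|_{L^r}$, Young/Bernstein applied to the convolution with the $L^1$-normalized bump at scale $2^{i-j}$ gives $\|K_j^i\|_{L^\infty}\lesssim 2^{-imn}2^{jmn/r}\|\Omega\|_{L^r}$, and combining with the gradient bound shows that $K_j$ is an $m$-linear Calder\'on--Zygmund kernel with constant $\lesssim\|\Omega\|_{L^r}2^{\max(0,j)(mn/r+\epsilon)}$; this is exactly where the factors $mn/r$ and $\epsilon$ come from.

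The gap is in your seed estimate and in how you propagate it. The seed you postulate, $\|\mathcal T_{\Omega,j}\|_{L^2\times\cdots\times L^2\to L^{2/m}}\lesssim 2^{-\delta|j|}\|\Omega\|_{L^r}$ for every $r>1$, is not only left unproved (you flag it yourself as the main obstacle) but cannot hold in that generality when $m\ge 3$: the tuple $(1/2,\ldots,1/2)$ lies in $\mathcal H^m(r)$ only for $r>2-2/m$ (take $\alpha=(1,\ldots,1)$ in the defining condition), and summing your decaying seed over $j$ would yield $\mathcal T_\Omega\colon L^2\times\cdots\times L^2\to L^{2/m}$ for all $r>1$, contradicting the sharpness of the Dosidis--Slav\'ikov\'a range; decay of this type is precisely the content of Lemma \ref{multi} and is only available for tuples in $\mathcal H^m(r)$. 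Moreover, the present lemma needs no decay whatsoever: a single-point strong-type bound with the same growth $2^{\max(0,j)(mn/r+\epsilon)}$ suffices, and that is what the cited proofs establish directly for the frequency-localized pieces. Finally, even granting your seed, you only propagate to tuples in $\mathcal H^m(r)$ via an ``$L^r$-H\"ormander extrapolation,'' whereas the statement asserts the bound for \emph{all} $1<p_1,\ldots,p_m<\infty$; since each $K_j$ is a smooth truncated kernel, the right mechanism is the Grafakos--Torres multilinear Calder\'on--Zygmund theorem, which upgrades one strong-type point together with the kernel constant $\lesssim 2^{\max(0,j)(mn/r+\epsilon)}$ to the full range -- the restriction to $\mathcal H^m(r)$ enters only later, when the pieces are summed over $j$. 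As written, your argument neither establishes the stated range nor rests on a valid key input.
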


\begin{lemma}[\cite{dosidis_multilinear_2024}]\label{multi}
	Let ${ (\frac{1}{p_1}, \ldots, \frac{1}{p_m}) }\in\mathcal{H}^{m}(r)$, and $r>1$. Then there exists an $c=c({{p_1}, \ldots, {p_m} },r,n)>0$ and $j_0=j_0(m,n)$ such that
	$$\Vert{\mathcal{T}_{\Omega}}_j\Vert_{L^{p_1}(\mathbb{R}^n)\times\cdots\times L^{p_n}(\mathbb{R}^n)\to L^{p}(\mathbb{R}^n)}\lesssim 2^{-cj}\Vert\Omega\Vert_{L^{r}(\mathbb{S}^{mn-1})}, ~if~j\geq j_0.$$
\end{lemma}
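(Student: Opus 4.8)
The plan is to deduce the decay from a two-tier interpolation anchored by a single oscillatory estimate at the symmetric $L^2$ corner. First I would prove that for $\Omega\in L^\infty(\mathbb{S}^{mn-1})$ there is $\delta=\delta(m,n)>0$ with
\[
\big\|\mathcal{T}_{\Omega,j}(f_1,\dots,f_m)\big\|_{L^{2/m}(\mathbb{R}^n)}\lesssim 2^{-\delta j}\,\|\Omega\|_{L^\infty(\mathbb{S}^{mn-1})}\prod_{i=1}^{m}\|f_i\|_{L^2(\mathbb{R}^n)},\qquad j\ge 1.
\]
This is the multilinear analogue of Seeger's estimate for rough singular integrals, realized in the bilinear case by Grafakos--He--Honz\'ik. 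From $\widehat{K^i_j}=\widehat{K^i}\,\widehat\beta(2^{i-j}\cdot)$ one reads off that $K^i_j$ is concentrated at physical scale $|y|\sim 2^i$ and frequency scale $|\xi|\sim 2^{j-i}$: the cancellation $\int_{\mathbb{S}^{mn-1}}\Omega=0$ produces a gain of a positive power of $2^j$ on the low-frequency portion of each block, the smoothness of the mollifier $\eta$ produces rapid decay on the high-frequency portion, and the borderline ``diagonal'' portion is handled by a further wavelet / Littlewood--Paley decomposition of $K_j$, with each elementary piece controlled by $L^2$-based (Plancherel) estimates exploiting the frequency localization and the pointwise decay of $\widehat{K_j}$, and the pieces summed geometrically. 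One loses a small positive power of $2^j$ in the diagonal range, but after optimizing the mollification scale the net exponent is negative.

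Next I would interpolate twice. Pairing the above $L^2$ decay with crude bounds of the type in Lemma~\ref{cz} at a suitable finite family of extremal exponent tuples (some coordinates near $1$, others near $\infty$), multilinear complex interpolation yields $\|\mathcal{T}_{\Omega,j}\|_{L^{p_1}\times\cdots\times L^{p_m}\to L^p}\lesssim 2^{-c_1 j}\|\Omega\|_{L^\infty}$ whenever $(\tfrac1{p_1},\dots,\tfrac1{p_m})$ lies in the $r=\infty$ version of the region, $\mathcal{H}^m(\infty)$, with $c_1>0$ degenerating toward its boundary. Then, regarding the $(m+1)$-linear form as also linear in $\Omega$, I would interpolate this $L^\infty(\mathbb{S}^{mn-1})$ bound against a crude $L^{r_0}(\mathbb{S}^{mn-1})$ bound for each piece $\mathcal{T}_{\Omega,j}$ with $r_0$ slightly above $1$ (available from Lemma~\ref{cz} with at most polynomial growth in $j$); choosing the interpolation parameter $\theta$ appropriately replaces $\|\Omega\|_{L^\infty}$ by $\|\Omega\|_{L^r}$, converts $2^{-c_1 j}$ into $2^{-cj}$ with $c=c_1\theta-\gamma(1-\theta)$, and shrinks the admissible region from $\mathcal{H}^m(\infty)$ down to exactly $\mathcal{H}^m(r)$ --- the shape of $\mathcal{H}^m(r)$, with its constraints $\tfrac1{p_\alpha}+\tfrac{|\alpha|-1}{r}<|\alpha|$, being what a dimensional count of the extreme configurations produces. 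Keeping $(\tfrac1{p_1},\dots,\tfrac1{p_m})$ strictly inside $\mathcal{H}^m(r)$ forces $c>0$, and $j_0=j_0(m,n)$ enters when absorbing the implicit constants of these interpolations, so the clean bound holds only for $j\ge j_0$.

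The main obstacle is making the roughness step honest as $r\downarrow 1$: the Plancherel/wavelet argument of the first step is transparent for $\Omega\in L^\infty$, but the plain interpolation between the $L^\infty$ and $L^{r_0}$ roughness bounds of the second step is too lossy to keep $c>0$ on all of $\mathcal{H}^m(r)$ when $r$ is close to $1$, because the growth rate $\gamma\sim mn/r_0$ of the crude bound then swamps the gain. The remedy --- this is the content of Dosidis--Slav\'{\i}kov\'a, which we invoke --- is to decompose $\Omega$ along its level sets $\{2^k\le|\Omega|<2^{k+1}\}$, run the $L^\infty$ argument on each normalized piece with constant $2^k$, and sum using $\sum_k 2^{kr}\,\sigma(\{|\Omega|\sim 2^k\})\lesssim\|\Omega\|_{L^r}^r$ together with a geometric gain in an auxiliary decomposition parameter that dominates the slowly converging $k$-sum; the delicate point is to balance this level-set sum against the wavelet sum so that the net power of $2^j$ stays strictly negative for every $r>1$, which is precisely where the full range $r>1$ (rather than $r>4/3$, as in Theorem~\ref{thB}) is recovered.
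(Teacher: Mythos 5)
The paper does not actually prove Lemma \ref{multi}: it is imported verbatim from \cite{dosidis_multilinear_2024}, so there is no internal argument here to measure your proposal against. Judged on its own terms, your outline does reproduce the broad strategy of that literature (Littlewood--Paley pieces ${\mathcal{T}_{\Omega}}_j$, a decaying estimate at an $L^2\times\cdots\times L^2\to L^{2/m}$ corner in the spirit of \cite{grafakos_rough_2018}, then multilinear interpolation against growth bounds of the type in Lemma \ref{cz}). The problem is the mechanism you propose for replacing $\Vert\Omega\Vert_{L^\infty}$ by $\Vert\Omega\Vert_{L^r}$, namely interpolating in the $\Omega$ slot between an $L^\infty$ bound with decay and an $L^{r_0}$ bound with growth: as you concede yourself, this is too lossy to keep the exponent $c$ strictly negative on all of $\mathcal{H}^m(r)$ when $r$ is close to $1$, and that regime is exactly what the lemma asserts (it is also why the bilinear precursor quoted as Theorem B only reached $r>4/3$). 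In \cite{dosidis_multilinear_2024}, building on the $L^2\times L^2\to L^1$ boundedness criterion of \cite{grafakos_l2times_2020} and its refinements, the decay estimate is obtained with $\Vert\Omega\Vert_{L^r}$ dependence directly (via level-set/wavelet counting arguments adapted to the multilinear setting), not by a posteriori interpolation in $\Omega$; so the step you label ``the content of Dosidis--Slav\'{\i}kov\'a, which we invoke'' is not a technical patch but the entire substance of the lemma. Consequently your proposal is not an independent proof --- its own two-tier interpolation fails precisely where the statement is strongest, and it closes the gap by citing the same reference. That said, since the paper itself simply cites \cite{dosidis_multilinear_2024} for this lemma, invoking that reference (and stating clearly that you do so, rather than presenting the sketch as a proof) is the appropriate resolution here.
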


\section{The Proof of Theorem \ref{th1.1}}\label{sec4}
We are now in the position to prove Theorem \ref{th1.1}.

\begin{proof}[\textbf{Proof of Theorem $\ref{th1.1}$}]
	By Littlewood-Paley decomposition of the kernel, $\mathcal{T}_{\Omega}$ can be written as
	$$\mathcal{T}_{\Omega}(f_1,\cdots,f_m)(x):=\sum_{j=-\infty}^{\infty}\int_{\mathbb{R}^{mn}}K_j(x-y_1,\cdots,x-y_m)\prod^{m}_{i=1}f_i(y_i)d\vec{y}:=\sum_{j=-\infty}^{\infty}{\mathcal{T}_{\Omega}}_j(f_1,\cdots,f_m)(x).$$
	Given a stopping collection $\mathcal{Q}$ with top cube $Q$, let $\mathcal{Q}_j$ be defined as

	\begin{align*}
	\Lambda_{\mathcal{Q}_{j,t_1}^{t_1}}(f_1,\cdots,f_m) & = 
	\frac{1}{\vert Q\vert}\Big[ \Big<{\mathcal{T}_{\Omega}}[K_j]^{t_2\land s_{Q}}_{t_1}(f_1{\bf 1}_{Q},f_2,\cdots,f_m),f_{m+1}\Big>\\&\quad -\sum_{\substack{L\in\mathcal{Q}\\L\subset Q}} \Bigl<{\mathcal{T}_{\Omega}}[K_j]^{t_2\land s_L}_{t_1}(f_1{\bf 1}_{L},f_2,\cdots,f_{m}),f_{m+1}\Big> \Big].
\end{align*}
	
	For the sake of simplicity, let's denote $\Lambda_{\mathcal{Q}_j}(f_1,\cdots,f_m)=\Lambda_{\mathcal{Q}_j,t_1}^{t_2}(f_1,\cdots,f_m).$
	
	Our proof will be divided into two parts. Each part should satisfy the assumption (\ref{psf1}) of lemma \ref{L1}. We therefore consider these two parts into two steps.
	
	$\bf Step~1.~Estimate~for~j>0.$
	
	Fix $0<\gamma<1$, by Lemma $\ref{cz}$, ${\mathcal{T}_{\Omega}}_j$ is a multilinear Calder\'{o}n-Zygmund operator with kernel $K_j$, and the size and smoothness conditions constant $A_j\leq C_{n,\gamma}\Vert\Omega\Vert_{L^{r}}2^{j(\gamma+mn/r)}$.
	
	Combining the methods in \cite{barron_weighted_2017}, section \ref{sec3}, we know the kernel of ${\mathcal{T}_{\Omega}}_j$ satisfies $\Vert[K_j]\Vert_{p}\lesssim 2^{j(\gamma+mn/r)}<\infty$ for fixed $j\in\mathbb{Z}$. This enables us to use Lemma \ref{cz} and Proposition 3.3 in \cite{barron_weighted_2017} with $A_j\leq C_{n,\epsilon}\Vert\Omega\Vert_{L^{r}}2^{j(\gamma+mn/r)}$ (choose $\beta=1$ and $p=1$). Hence
	$$\vert\Lambda_{\mathcal{Q}_j}(f_1,\cdots,f_m)\vert\lesssim\Vert\Omega\Vert_{L^r(\mathbb{S}^{mn-1})}2^{j(\gamma+mn/r)}\vert Q\vert\Vert f_1\Vert_{\dot{\mathcal{X}}_1}\Vert f_2\Vert_{\mathcal{Y}_{1}}\cdots\Vert f_{m+1}\Vert_{\mathcal{Y}_{1}}.$$
	
	By Lemma \ref{multi}, choosing $(\frac{1}{p_1}, \ldots, \frac{1}{p_m})\in\mathcal{H}^{m}(r),~r>1$, we have
	$$\vert\Lambda_{\mathcal{Q}_j}(f_1,\cdots,f_m)\vert\lesssim\Vert\Omega\Vert_{L^r(\mathbb{S}^{mn-1})}2^{-cj}\vert Q\vert\Vert f_1\Vert_{\dot{\mathcal{X}}_{p_1}}\Vert f_2\Vert_{\mathcal{Y}_{p_2}}\cdots\Vert f_{m+1}\Vert_{\mathcal{Y}_{p^{'}}}.$$
	
	Interpolation via Lemma \ref{interpolation}, it follows that for any $0<\epsilon_1,\cdots,\epsilon_{m+1}<1$ and $q_i=1+\epsilon_ip_i$, $i=1,\cdots,m$, $q_{m+1}=1+\epsilon_{m+1}p'$ so that
	$$\vert\Lambda_{\mathcal{Q}_j}(f_1,\cdots,f_m)\vert\lesssim\Vert\Omega\Vert_{L^r(\mathbb{S}^{mn-1})}2^{j(\gamma+mn/r)(1-\frac{\widetilde{\epsilon}}{m}\widetilde{p})}2^{-cj(\frac{\widetilde{\epsilon}}{m}\widetilde{p})},$$
	where $\widetilde{\epsilon}=\min_{1\leq i\leq{m+1}}\{\epsilon_i\},~\widetilde{p}=\min_{1\leq i\leq{m+1}}\{p_i\}.$
	
	If we choose $\frac{m(\gamma r+mn)}{\widetilde{p}(\gamma r+mn+cr)}<\widetilde{\epsilon}<1$ and $\widetilde{p}>m$. Then there exists a constant $\rho>0$ such that
	$$\vert\Lambda_{\mathcal{Q}_j}(f_1,\cdots,f_m)\vert\lesssim\Vert\Omega\Vert_{L^r(\mathbb{S}^{mn-1})}2^{-\rho j}\Vert f_1\Vert_{\dot{\mathcal{X}}_{q_1}}\Vert f_2\Vert_{\mathcal{Y}_{q_2}}\cdots\Vert f_{m+1}\Vert_{\mathcal{Y}_{q_{m+1}}}.$$
	
	Summing over $j\in\mathbb{Z}^{+}$, we can conclude that
	$$\sum_{j>0}\vert\Lambda_{\mathcal{Q}_j}(f_1,\cdots,f_m)\vert\lesssim\Vert\Omega\Vert_{L^r(\mathbb{S}^{mn-1})}\Vert f_1\Vert_{\dot{\mathcal{X}}_{q_1}}\Vert f_2\Vert_{\mathcal{Y}_{q_2}}\cdots\Vert f_{m+1}\Vert_{\mathcal{Y}_{q_{m+1}}}.$$
	
	By symmetry, it also yields that
	$$\sum_{j>0}\vert\Lambda_{\mathcal{Q}_j}(f_1,\cdots,f_m)\vert\lesssim\Vert\Omega\Vert_{L^r(\mathbb{S}^{mn-1})}2^{-\rho j}\Vert f_1\Vert_{\mathcal{Y}_{q_1}}\Vert f_2\Vert_{\dot{\mathcal{X}}_{q_2}}\cdots\Vert f_{m+1}\Vert_{\mathcal{Y}_{q_{m+1}}};$$
	$$\vdots$$
	$$\sum_{j>0}\vert\Lambda_{\mathcal{Q}_j}(f_1,\cdots,f_m)\vert\lesssim\Vert\Omega\Vert_{L^r(\mathbb{S}^{mn-1})}2^{-\rho j}\Vert f_1\Vert_{\mathcal{Y}_{q_1}}\Vert f_2\Vert_{\mathcal{Y}_{q_2}}\cdots\Vert f_{m+1}\Vert_{\dot{\mathcal{X}}_{q_{m+1}}}.$$
	
	${\bf Step~2.~Estimate~for~j\leq 0}.$
	In a similar way, by Lemma \ref{cz}, Lemma \ref{multi} and Lemma \ref{interpolation}, then summing over $j\leq 0$, one obtains
	$$\sum_{j\leq 0}\vert\Lambda_{\mathcal{Q}_j}(f_1,\cdots,f_m)\vert\lesssim\Vert\Omega\Vert_{L^r(\mathbb{S}^{mn-1})}\Vert f_1\Vert_{\dot{\mathcal{X}}_{q_1}}\Vert f_2\Vert_{\mathcal{Y}_{q_2}}\cdots\Vert f_{m+1}\Vert_{\mathcal{Y}_{q_{m+1}}}.$$
	By symmetry, it also yields that
	$$\sum_{j\leq 0}\vert\Lambda_{\mathcal{Q}_j}(f_1,\cdots,f_m)\vert\lesssim\Vert\Omega\Vert_{L^r(\mathbb{S}^{mn-1})}2^{-\rho j}\Vert f_1\Vert_{\mathcal{Y}_{q_1}}\Vert f_2\Vert_{\dot{\mathcal{X}}_{q_2}}\cdots\Vert f_{m+1}\Vert_{\mathcal{Y}_{q_{m+1}}};$$
	$$\vdots$$
	$$\sum_{j\leq 0}\vert\Lambda_{\mathcal{Q}_j}(f_1,\cdots,f_m)\vert\lesssim\Vert\Omega\Vert_{L^r(\mathbb{S}^{mn-1})}2^{-\rho j}\Vert f_1\Vert_{\mathcal{Y}_{q_1}}\Vert f_2\Vert_{\mathcal{Y}_{q_2}}\cdots\Vert f_{m+1}\Vert_{\dot{\mathcal{X}}_{q_{m+1}}}.$$
	
	Using Theorem {A}, we can find $(\frac{1}{p_1}, \ldots, \frac{1}{p_m})\in\mathcal{H}^{m}(r),~r>1$ and $\frac{1}{p}=\sum_{i=1}^{m}\frac{1}{p_i}$ such that ${\mathcal{T}_{\Omega}}$ maps $L^{p_1}\times\cdots\times L^{p_{m}}\to L^{p}$. In addition, a smooth truncation of the kernel $K$ also gives rise to an operator with a similar bound (see Remark \ref{rem}), thus we have that $C_\mathcal{T}<\infty$ and (\ref{CT1}) is valid. Hence, $\mathcal{T}_{\Omega}$ satisfies Lemma \ref{L1} and this finishes the proof of Theorem \ref{th1.1}.
\end{proof}

\section{Derivation of the Corollaries}\label{sec5}
\begin{proof}[\textbf{Proof of Corollary $\ref{cor1}$}]
	The methods employed here are adapted from \cite{culiuc_domination_2018}, although the weight classes under consideration differ. 
	
	Under the same assumption as in Theorem \ref{th1.1}, we have $q_i>1+\frac{mn}{mn+cr}p_i$, $i=1,\cdots,m+1$. Define $\sigma=v_{\bf w}^{-{\xi'}/{\xi}}$ with $q_i<\xi_i$, $i=1,\cdots,m$, and $q_{m+1}^{'}>\xi$. By Theorem \ref{th1.1}, in conjunction with duality arguments, for any sparse collection $\mathcal S$, it is enough to show that
	\begin{equation}\label{corproof1}
	{\rm PSF}_{\mathcal{S};q_1,\cdots,q_{m+1}}(f_1,\cdots,f_{m+1})\lesssim\prod_{i=1}^{m}\Vert f_i\Vert_{L^{\xi_i}(v_i)}\Vert f_{m+1}\Vert_{L^{\xi'}(\sigma)}
	\end{equation}
	with bounds independent of $\mathcal S$.
	
	Let
	$w_i=v_i^{\frac{q_i}{q_i-\xi_i}},~~i=1,\cdots,m,$
	$w_{m+1}=\sigma^{\frac{q_{m+1}}{q_{m+1}-\xi'}}$
	and
$f_i=g_iw_i^{\frac{1}{q_i}},~~i=1,\cdots,m+1.$
	Then we have
	$$\Vert f_i\Vert_{L^{\xi}(v_i)}=\Vert g_i\Vert_{L^{\xi}(w_i)},~~i=1,\cdots,m, \quad\hbox{and}\quad \Vert f_{m+1}\Vert_{L^{\xi'}(\sigma)}=\Vert g_{m+1}\Vert_{L^{\xi'}(w_{m+1})}.$$
	
	Let $\xi_{m+1}=\xi'$. It follows that
	\begin{align}
	{\rm PSF}_{{\mathcal S};q_1,\cdots,q_{m+1}}(f_1,\cdots,f_{m+1})\nonumber
	&={\rm PSF}_{{\mathcal S};q_1,\cdots,q_{m+1}}(g_1w_1^{\frac{1}{q_1}},\cdots,g_{m+1}w_{m+1}^{\frac{1}{q_{m+1}}})\nonumber\\
	&=\sum_{{Q}\in {\mathcal S}}\left(\prod_{i=1}^{m+1}w_j(E_{Q})^{\frac{1}{\xi_i}}\left(\frac{\langle g_i^{q_i}w_i\rangle_{Q}}{\langle w_i\rangle_{Q}}\right)^{\frac{1}{q_i}}\right)\left(\prod_{i=1}^{m+1}(\langle w_i\rangle_{Q})^{\frac{1}{q_i}-\frac{1}{\xi_i}}\right)\\&\quad\times\left(\vert Q\vert\prod_{i=1}^{m+1}\left(\frac{\langle w_i\rangle_{Q}}{w_i(E_{Q})}\right)^{\frac{1}{\xi_i}}\right).\nonumber
	\end{align}
	
	By a simple calculation, we have
	\begin{align}
	\prod_{i=1}^{m}\langle w_i\rangle_{Q}^{\frac{1}{q_i}-\frac{1}{\xi_i}}\langle w_{m+1}\rangle_{Q}^{\frac{1}{q_{m+1}}-\frac{1}{\xi'}}&=\prod_{i=1}^{m}\langle w_i\rangle_{Q}^{\frac{1}{q_i}-\frac{1}{\xi_i}}\langle v_{\bf w}^{\frac{q_{m+1}^{'}}{q_{m+1}^{'}-\xi}}\rangle_{Q}^{\frac{1}{\xi}-\frac{1}{q_{m+1}^{'}}}\nonumber=[\bf v]_{A_{{\boldsymbol \xi},\bf q}}.\nonumber
	\end{align}
	
	We now deal with the second product  using the technique in \cite{lerner_intuitive_2019}.
	Let
	$$x_i=\frac{q_i-\xi_i}{q_i\xi_i},~~i=1,\cdots,m, \quad\hbox{and\ }x_{m+1}=\frac{q_{m+1}-\xi'}{q_{m+1}\xi'}.$$
	Then $\prod_{i=1}^{m+1}w_i^{-\frac{x_i}{2}}=1.$
	The $\rm H\ddot{o}lder's$ inequality and the fact that
	$$\bigg(\sum_{i=1}^{m+1}\frac{1}{2q'}\bigg)-\bigg(\sum_{i=1}^{m+1}\frac{x_i}{2}\bigg)=1$$
	imply that
	$$\prod_{i=1}^{m+1}\left(w_i(E_{Q})\right)^{-\frac{x_i}{2}}E_{Q}^{\frac{1}{2q'}}\geq\int_{E_{Q}}\prod_{i=1}^{m+1}w_i^{-\frac{x_i}{2}}=\vert E_{Q}\vert.$$
	This together with the sparseness of $\mathcal S$ yields that
	$$\prod_{i=1}^{m+1}\left(\frac{w_i(E_{Q})}{\vert Q\vert}\right)^{-\frac{x_i}{2}}\geq \eta^{\sum_{i=1}^{m+1}\frac{x_i}{2}}.$$
	Therefore
	$$\prod_{i=1}^{m+1}\left(\frac{{\langle w_i\rangle}_{Q}}{\frac{1}{\vert Q\vert}w_i(E_{Q})}\right)^{-\frac{x_i}{2}}\leq\eta^{\sum_{i=1}^{m+1}\frac{x_i}{2}}\prod_{i=1}^{m+1}\langle w_i\rangle^{-\frac{x_i}{2}}_{Q}.$$
	
	By Definition \ref{def1}, we have
	\begin{align}
	\prod_{i=1}^{m+1}\left(\frac{{\langle w_i\rangle}_{Q}}{\frac{1}{\vert Q\vert}}\right)^{\frac{1}{\xi_i}}&\leq\left(\eta^{\sum_{i=1}^{m+1}x_i}\prod_{i=1}^{m+1}{\langle w_i\rangle}_{Q}^{-x_i}\right)^{\max\left(-\frac{1}{x_i\xi_i}\right)}\nonumber\leq\left(\eta^{\sum_{i=1}^{m+1}x_i}[\bf v]_{A_{{\boldsymbol \xi},{\bf q}}}\right)^{\max\left(-\frac{1}{x_i\xi_i}\right)}.\nonumber
	\end{align}
Note that, by \cite{culiuc_domination_2018}, the first product depends on the $L^{\xi_i}(w_i)$-boundedness of $M_{q_i,w_i}$, where
	$$M_{q_i,w_i}f(x)=\sup\limits_{Q\in x}\left(\frac{1}{\vert w(Q)\vert}\int_{Q}\vert f\vert^{q_i}w_i\right)^{\frac{1}{q_i}}.$$
	This concludes the proof of (\ref{corproof1}).
\end{proof}

\begin{proof}[\textbf{Proof of Corollary $\ref{cor2}$}]
	Under the same assumption as in Theorem \ref{th1.1}, in conjunction with duality arguments, we have $q_i>1+\frac{mn}{mn+cr}p_i$, $i=1,\cdots,m+1$. For $2<\xi<\infty$, let $\sigma=w^{-\frac{2}{2-\xi}}$, and $q_i<\rho$, $q_i<\xi$. By Theorem \ref{th1.1} and duality, it is enough to prove that for any sparse collection $\mathcal S$, we have
	$${\rm PSF}_{{\mathcal S};q_1,\cdots,q_{m+1}}(f_1,\cdots,f_{m+1})\lesssim\prod_{i=1}^{m}\Vert f_i\Vert_{L^{\xi}(w)}\Vert f_{m+1}\Vert_{L^{\rho}(\sigma)}$$
	with bounds independent of $\mathcal S$. The proof of this fact is omitted as it follows from the same step as in section 5 in \cite{barron_weighted_2017}.
\end{proof}

Next, we provide another corollary which is related to Corollary 1.7 in \cite{culiuc_domination_2018}.

\begin{corollary}\label{cor3}
	Under the same assumption as in Theorem \ref{th1.1}. For $q_i>1+\frac{mn}{mn+cr}p_i$, $i=1,\cdots,m+1$, $\frac{1}{q}=\sum_{i=1}^{m}\frac{1}{q_i}$. Then for weights $w_i^2\in A_{q_i}$, $i=1,\cdots,m$, $w=\prod_{i=1}^mw_i^{\frac{q}{q_i}}$, there exists a constant $C=C_{w,q_1,\cdots,q_m,n,r}$ such that
	$$\Vert{\mathcal T}_{\Omega}(f_1,\cdots,f_m)\Vert_{L^q(w)}\leq C\Vert\Omega\Vert_{L^{r}({\mathbb S}^{mn-1})}\prod^{m}_{i=1}\Vert f_i\Vert_{L^{p_i}(w_i)}.$$
\end{corollary}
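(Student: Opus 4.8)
The plan is to deduce Corollary~\ref{cor3} from the sparse domination in Theorem~\ref{th1.1} by combining it with a weighted bound for the multilinear positive sparse form in the spirit of Corollary~\ref{cor1} and of Corollary~1.7 in \cite{culiuc_domination_2018}. First I would invoke Theorem~\ref{th1.1} to fix exponents $q_i > 1 + \frac{mn}{mn+cr}p_i$, $i=1,\dots,m+1$ (with $p_{m+1}=p'$ and $\tfrac1q = \sum_{i=1}^m \tfrac1{q_i}$), so that
$$
\bigl|\langle \mathcal{T}_\Omega(f_1,\dots,f_m), f_{m+1}\rangle\bigr|
\;\le\; C\,\Vert\Omega\Vert_{L^r(\mathbb{S}^{mn-1})}\sup_{\mathcal S}\,{\rm PSF}_{\mathcal S;q_1,\dots,q_{m+1}}(f_1,\dots,f_{m+1}).
$$
By duality, $\Vert \mathcal{T}_\Omega(f_1,\dots,f_m)\Vert_{L^q(w)} = \sup\{|\langle \mathcal{T}_\Omega(\vec f),f_{m+1}\rangle| : \Vert f_{m+1}\Vert_{L^{q'}(w^{1-q'})} \le 1\}$, so it suffices to bound the sparse form $\sup_{\mathcal S}{\rm PSF}_{\mathcal S;q_1,\dots,q_{m+1}}(f_1,\dots,f_{m+1})$ by $\prod_{i=1}^m \Vert f_i\Vert_{L^{p_i}(w_i)}\cdot\Vert f_{m+1}\Vert_{L^{q'}(w^{1-q'})}$ uniformly in $\mathcal S$, under the hypothesis $w_i^2\in A_{q_i}$ and $w = \prod_{i=1}^m w_i^{q/q_i}$.

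The key step is thus the weighted estimate for the sparse form. I would follow the Culiuc--Di Plinio--Ou argument: rescale by setting $f_i = g_i w_i$ (so that $\Vert f_i\Vert_{L^{p_i}(w_i)}$ is controlled once we track the right powers), and on each cube $Q\in\mathcal S$ write
$$
\langle f_i\rangle_{q_i,Q} = \Bigl(\tfrac1{|Q|}\int_Q |g_i|^{q_i} w_i^{q_i}\Bigr)^{1/q_i}
= \langle w_i^{q_i}\rangle_Q^{1/q_i}\,\Bigl(\tfrac{1}{w_i^{q_i}(Q)}\int_Q |g_i|^{q_i} w_i^{q_i}\Bigr)^{1/q_i}
\le \langle w_i^{q_i}\rangle_Q^{1/q_i}\, M^{w_i^{q_i}}_{q_i}(g_i)(y)
$$
for $y\in E_Q$, where $M^{\nu}_{q_i}$ is the $L^{q_i}(\nu)$-Hardy--Littlewood maximal operator. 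Then $|Q|\prod_i\langle f_i\rangle_{q_i,Q}$ is, up to $\prod_i\langle w_i^{q_i}\rangle_Q^{1/q_i}$, bounded by $|E_Q|/\eta$ times $\prod_i M^{w_i^{q_i}}_{q_i}(g_i)$; summing over the pairwise disjoint $E_Q$ and applying Hölder's inequality with exponents matched to $1/q = \sum 1/q_i$ (and the dual slot handled by $M^\sigma_{q_{m+1}}$ with $\sigma$ the weight dual to $w$) reduces everything to the $L^{q_i}(w_i^{q_i})$-boundedness of $M^{w_i^{q_i}}_{q_i}$. The hypothesis $w_i^2\in A_{q_i}$ — equivalently a genuine $A_\infty$/$A_{q_i}$-type condition on $w_i^{q_i}$ after the rescaling — is exactly what guarantees this boundedness with constant depending only on $[w_i^2]_{A_{q_i}}$; one also needs the compatibility $w = \prod w_i^{q/q_i}$ so that the product of the $A_{q_i}$ constants feeds into a well-defined multilinear weight class (the analogue of Definition~\ref{def1}), so that the resulting constant $C_{w,q_1,\dots,q_m,n,r}$ is finite.

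The main obstacle I anticipate is bookkeeping the exponents in the Hölder step: one must verify that with $1/q = \sum_{i=1}^m 1/q_i$, the dual exponent $q'$, and the choice $\sigma$ for the $(m+1)$st weight, the product $\prod_{i=1}^{m+1}\langle w_i^{q_i}\rangle_Q^{1/q_i}$ collapses (via the assumed structural relation $w=\prod w_i^{q/q_i}$ and $w_i^2\in A_{q_i}$) into a uniformly bounded $A_{\vec q}$-type constant, with no leftover powers of $|Q|$. This is precisely the computation carried out in \cite{culiuc_domination_2018} for the $\ell^2$-decoupled weight class $\{w_i^2\in A_{q_i}\}$, and for us it goes through verbatim once the range $q_i > 1 + \frac{mn}{mn+cr}p_i$ from Theorem~\ref{th1.1} is in force; no new analytic input beyond the weighted maximal function bounds is required, so I would present this corollary as a direct consequence and refer to \cite{culiuc_domination_2018, lerner_intuitive_2019} for the routine details of the Hölder bookkeeping.
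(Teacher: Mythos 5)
The paper offers no written proof of this corollary: it is stated only as the analogue of Corollary~1.7 in \cite{culiuc_domination_2018}, to be deduced from Theorem~\ref{th1.1} by the same weighted estimate for the positive sparse form (most naturally by verifying that $w_i^2\in A_{q_i}$ places the weight vector in the class of Definition~\ref{def2} and then invoking Corollary~\ref{cor1}). Your outline follows that intended route, but the execution has a genuine gap at the decisive step. You reduce everything to the $L^{q_i}(w_i^{q_i})$-boundedness of $M^{w_i^{q_i}}_{q_i}$, and this is false: since $\Vert M^{\nu}_{q}(g)\Vert_{L^{q}(\nu)}^{q}=\Vert M^{\nu}(|g|^{q})\Vert_{L^{1}(\nu)}$, such a bound would amount to $L^1(\nu)$-boundedness of the $\nu$-maximal operator, which fails for every nontrivial measure; moreover no $A_p$-type hypothesis on the weight can repair it, and the attribution of the maximal bound to $w_i^2\in A_{q_i}$ is misplaced anyway (for $s>1$ the dyadic $\nu$-maximal operator is bounded on $L^s(\nu)$ for arbitrary $\nu$, with no weight condition). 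The root of the problem is that you use the same tuple $(q_1,\dots,q_{m+1})$ both as the sparse exponents coming from Theorem~\ref{th1.1} and as the integrability exponents in the H\"older step, leaving no room: the correct argument must run the sparse form at exponents strictly \emph{below} those at which one integrates, and the available hypotheses — the openness of the range $q_i>1+\frac{mn}{mn+cr}p_i$ in Theorem~\ref{th1.1} and the reverse-H\"older self-improvement encoded in $w_i^2\in A_{q_i}$ — are precisely what must be spent to create that strict separation.

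The part you defer as routine is in fact the content of the corollary. The collapse of $\prod_i\langle w_i^{q_i}\rangle_Q^{1/q_i}$ together with the dual-slot average into a finite multilinear characteristic, under $w_i^2\in A_{q_i}$ and $w=\prod_i w_i^{q/q_i}$, is exactly where the exponent $2$ has to be justified, and it does not transfer ``verbatim'' from \cite{culiuc_domination_2018}: their Corollary~1.7 computation is tailored to sparse exponents near $2$ for the bilinear Hilbert transform in the local $\ell^2$ range, whereas here the sparse tuple produced by Theorem~\ref{th1.1} sits near $1+\frac{mn}{mn+cr}p_i$ and is tied to $\vec p$, so the H\"older/duality verification must be redone (the natural vehicle being Definition~\ref{def2} and Corollary~\ref{cor1} of this paper). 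Your two reductions are also mutually inconsistent: the duality step asks for a bound by $\prod_{i=1}^m\Vert f_i\Vert_{L^{p_i}(w_i)}\Vert f_{m+1}\Vert_{L^{q'}(w^{1-q'})}$, while the maximal-function step, after the substitution $f_i=g_iw_i$, produces norms in the $\Vert f_iw_i\Vert_{L^{q_i}}$ convention, and the discrepancy (which mirrors the $p_i$ versus $q_i$ mismatch already present in the statement) is never reconciled. Finally, the identity $\Vert\mathcal{T}_\Omega(f_1,\dots,f_m)\Vert_{L^q(w)}=\sup\{|\langle\mathcal{T}_\Omega(f_1,\dots,f_m),f_{m+1}\rangle|\}$ requires $q\ge 1$, while $1/q=\sum_{i=1}^m 1/q_i$ may exceed $1$ here, so the quasi-Banach case needs a separate argument. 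As written, the proposal is an outline of the right strategy but does not close.
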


We end this section with another corollary concerning the commutator of a rough ${\mathcal T}_{\Omega}$ with a pair of BMO functions ${\bf b}=(b_1,\cdots,b_m)$. For a pair ${\bf \alpha}=(\alpha_1,\cdots,\alpha_m)$ of nonnegative integers, we define this commutator (acting on a pair of nice functions $f_j$) as follows:
$$[{\mathcal T}_{\Omega},{\bf b}](f_1,\cdots,f_m)(x)=p.v.\int_{{\mathbb R}^{mn}}K(x-y_1,\cdots,x-y_m)\prod_{i=1}^{m}f_i(y_i)\prod_{j=1}^m(b_i(x)-b_i(y_i))^{\alpha_i}d{\vec y}.$$

The following Lemma was given in [\cite{li_extrapolation_2020}, Proposition 5.1], which is crucial for proving the boundedness of the commutator.

\begin{lemma}[\cite{li_extrapolation_2020}]\label{Com}
	Let $T$ be an $m$-linear operator and let $\vec{r} = (r_1, \ldots, r_{m+1})$, with $1 \leq r_1, \ldots, r_{m+1} < \infty$. Assume that there exists $\vec{s} = (s_1, \ldots, s_m)$, with $1 \leq s_1, \ldots, s_m < \infty$, $1 < s < \infty$, and $\vec{r} \prec \vec{s}$, such that for all $\vec{w} = (w_1, \ldots, w_m) \in A_{\vec{s}, \vec{r}}$, we have
	\[
	\|T(f_1, f_2, \ldots, f_m)\|_{L^s(w)} \lesssim \prod_{i=1}^m \|f_i\|_{L^{s_i}(w_i)},
	\]
	where $\frac{1}{s} := \frac{1}{s_1} + \cdots + \frac{1}{s_m}$ and $w := \prod_{i=1}^m w_i^{\frac{s}{s_i}}$.
	
	Then, for all weights $\vec{v} \in A_{\vec{s}, \vec{r}}$, for all $\boldsymbol{b} = (b_1, \ldots, b_m) \in \text{BMO}^m$, and for each multi-index $\alpha$, we have
	\begin{equation}
	\|[T, \boldsymbol{b}]_\alpha(f_1, f_2, \ldots, f_m)\|_{L^s(v)} \lesssim \prod_{i=1}^m \|b_j\|_{\text{BMO}}^{\alpha_i} \|f_i\|_{L^{s_i}(v_i)},
	\end{equation}
	where $v := \prod_{i=1}^m v_i^{\frac{s}{s_i}}$.
\end{lemma}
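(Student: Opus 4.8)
The plan is to adapt the analytic conjugation method of Coifman--Rochberg--Weiss to the present multilinear, limited-range weighted setting; since the exponent $s$ and the weight $\vec v$ are the same in the hypothesis and in the conclusion, no extrapolation across exponents is needed, and the whole argument reduces to a Cauchy-integral extraction from a conjugated operator. By the homogeneity of the desired estimate in each $b_i$ we may assume $\|b_i\|_{\mathrm{BMO}}=1$ for every $i$ with $\alpha_i>0$ (if $\alpha_i>0$ while $b_i$ is constant, the commutator is identically $0$), and by a standard truncation/density argument we may take the $b_i$ bounded and the $f_i$ nice, so that all interchanges of sum, integral and derivative below are legitimate and the final bound is independent of these reductions. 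For $\boldsymbol z=(z_1,\dots,z_m)\in\mathbb C^m$ set
\[
G_{\boldsymbol z}(f_1,\dots,f_m)(x):=e^{\sum_{i=1}^{m}z_i b_i(x)}\,T\!\big(e^{-z_1 b_1}f_1,\dots,e^{-z_m b_m}f_m\big)(x)=\int_{\mathbb R^{mn}}K(x-\vec y)\prod_{i=1}^{m}e^{z_i(b_i(x)-b_i(y_i))}f_i(y_i)\,d\vec y .
\]
Then $\partial_{z_1}^{\alpha_1}\cdots\partial_{z_m}^{\alpha_m}G_{\boldsymbol z}(\vec f)(x)\big|_{\boldsymbol z=0}=[T,\boldsymbol b]_\alpha(\vec f)(x)$, so it suffices to (i) bound $G_{\boldsymbol z}$ uniformly on a polydisc, and (ii) feed this into the several-variable Cauchy formula.

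For step (i), write $z_i=t_i+i\theta_i$ and note $|e^{\sum z_i b_i(x)}|=e^{\sum t_i b_i(x)}$, so that
\[
\|G_{\boldsymbol z}(\vec f)\|_{L^s(v)}^{s}=\int_{\mathbb R^n}\big|T(e^{-z_1 b_1}f_1,\dots,e^{-z_m b_m}f_m)\big|^{s}\,v\,e^{s\sum_i t_i b_i}\,dx .
\]
Put $W_i:=v_i\,e^{s_i t_i b_i}$. Then $\prod_{i=1}^{m}W_i^{s/s_i}=v\,e^{s\sum_i t_i b_i}$ and $\int|f_i|^{s_i}e^{-s_i t_i b_i}W_i=\int|f_i|^{s_i}v_i$, so provided $\vec W=(W_1,\dots,W_m)\in A_{\vec s,\vec r}$ with constant bounded uniformly for $|t_i|\le\epsilon_i$ (so that the assumed bound applies with a uniformly bounded constant), the weighted hypothesis for $T$ gives
\[
\|G_{\boldsymbol z}(\vec f)\|_{L^s(v)}=\|T(e^{-z_1 b_1}f_1,\dots,e^{-z_m b_m}f_m)\|_{L^{s}(\prod_i W_i^{s/s_i})}\lesssim\prod_{i=1}^{m}\|e^{-z_i b_i}f_i\|_{L^{s_i}(W_i)}=\prod_{i=1}^{m}\|f_i\|_{L^{s_i}(v_i)}
\]
uniformly on the polydisc $\{|z_i|\le\epsilon_i\}_{i=1}^{m}$.

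The main obstacle is the stability clause just used: that the class $A_{\vec s,\vec r}$ is ``open'' under multiplicative $\mathrm{BMO}$ perturbations, i.e. $(v_1 e^{s_1 t_1 b_1},\dots,v_m e^{s_m t_m b_m})\in A_{\vec s,\vec r}$ with controlled constant whenever $\sum_i|t_i|$ is small, say $|t_i|\le \epsilon_i\sim 1/\|b_i\|_{\mathrm{BMO}}$. Unwinding Definition \ref{def2}: replacing $v_k$ by $v_k e^{s_k t_k b_k}$ replaces $w=\prod_k v_k^{s/s_k}$ by $w\,e^{s\sum_k t_k b_k}$, and each of the $m+1$ averaged factors $\langle(\cdot)^{\gamma}\rangle_Q$ occurring in $[\vec v]_{A_{\vec s,\vec r}}$ picks up a factor of the shape $\langle e^{c\,t_k b_k}\rangle_Q$ with $c$ a fixed exponent; one then verifies that the product of all these correction factors is bounded uniformly in $Q$. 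This is precisely where the John--Nirenberg inequality enters, in the form $\sup_Q \langle e^{\delta b}\rangle_Q\,\langle e^{-\delta b}\rangle_Q\le C$ for $|\delta|\le c/\|b\|_{\mathrm{BMO}}$; it is the limited-range multilinear analogue of the openness of the $A_p$ and $\mathrm{RH}_\sigma$ classes. I would either invoke this stability from \cite{li_extrapolation_2020} directly or reproduce the short John--Nirenberg computation described above.

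Finally, for step (ii), $\boldsymbol z\mapsto G_{\boldsymbol z}(\vec f)(x)$ is holomorphic on the polydisc, so Cauchy's integral formula in $m$ variables yields
\[
[T,\boldsymbol b]_\alpha(\vec f)(x)=\frac{\alpha_1!\cdots\alpha_m!}{(2\pi i)^m}\int_{|z_1|=\epsilon_1}\!\!\cdots\!\int_{|z_m|=\epsilon_m}\frac{G_{\boldsymbol z}(\vec f)(x)}{z_1^{\alpha_1+1}\cdots z_m^{\alpha_m+1}}\,dz_m\cdots dz_1 .
\]
Taking $L^s(v)$ norms, applying Minkowski's integral inequality, and using the uniform bound from step (i) gives
\[
\|[T,\boldsymbol b]_\alpha(\vec f)\|_{L^s(v)}\lesssim\Big(\prod_{i=1}^{m}\epsilon_i^{-\alpha_i}\Big)\prod_{i=1}^{m}\|f_i\|_{L^{s_i}(v_i)}\lesssim\prod_{i=1}^{m}\|b_i\|_{\mathrm{BMO}}^{\alpha_i}\,\|f_i\|_{L^{s_i}(v_i)},
\]
since $\epsilon_i^{-1}\lesssim\|b_i\|_{\mathrm{BMO}}$ (equal to $1$ after the normalization). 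This realizes the scheme; the only step that is not routine bookkeeping is the weight-stability estimate above.
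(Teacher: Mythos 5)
This lemma is not proved in the paper at all: it is imported verbatim from \cite{li_extrapolation_2020} (Proposition 5.1 there), so the only meaningful comparison is with the argument in that reference. Your proposal is essentially that argument: the Coifman--Rochberg--Weiss conjugation $G_{\boldsymbol z}=e^{\sum z_ib_i}T(e^{-z_1b_1}\cdot,\dots,e^{-z_mb_m}\cdot)$, the observation that boundedness at the single exponent $\vec s$ but for \emph{all} weights in $A_{\vec s,\vec r}$ is exactly what lets one absorb the factors $e^{s_it_ib_i}$ into the weights, the stability of the class $A_{\vec s,\vec r}$ under such exponential BMO perturbations for $|t_i|\lesssim 1/\|b_i\|_{\mathrm{BMO}}$, and the several-variable Cauchy formula to extract $[T,\boldsymbol b]_\alpha$ with the factor $\prod_i\epsilon_i^{-\alpha_i}$. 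Two points deserve to be made explicit rather than left implicit. First, the hypothesis must be read (as in \cite{li_extrapolation_2020}) with an implicit constant that depends only, and increasingly, on $[\vec w]_{A_{\vec s,\vec r}}$; otherwise ``uniformly bounded constant on the polydisc'' has no content. Second, your stability step is sketched a bit too optimistically: in Definition \ref{def2} the exponentials do not split off as clean factors $\langle e^{c\,t_kb_k}\rangle_Q$, because they sit inside the averages multiplied by powers of the weights. The standard fix is to first use the self-improvement (openness/reverse H\"older) of the quantities appearing in $[\vec v]_{A_{\vec s,\vec r}}$, i.e. $\langle u\,e^{ctb}\rangle_Q\le\langle u^{\sigma}\rangle_Q^{1/\sigma}\langle e^{ctb\sigma'}\rangle_Q^{1/\sigma'}$ for some $\sigma>1$ depending on the characteristic, and only then apply John--Nirenberg in the form $\sup_Q\langle e^{\delta b}\rangle_Q\langle e^{-\delta b}\rangle_Q\le C$ for $|\delta|\lesssim 1/\|b\|_{\mathrm{BMO}}$; this is precisely how the perturbation lemma is proved in \cite{li_extrapolation_2020}. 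With that step filled in (or simply quoted from the reference, as you suggest), your proof is complete and coincides with the one behind the cited result.
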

We now present our result for the commutators as follows.
\begin{corollary}\label{cor4}
	Under the same assumptions as in Theorem \ref{th1.1}, let ${\boldsymbol s}=(s_1,\cdots,s_m),~{\bf q}=(q_1,\cdots,q_{m+1})$ with ${\boldsymbol s}\prec {\bf q}$. Let
	$\mu_{\bf v}=\prod^{m}_{i=1}v_i^{{s}{s_k}}$
	and $\frac{1}{s}=\sum^{m}_{i=1}\frac{1}{s_i}$, $1<s<\frac{mn+mnp'+cr}{mnp'}$, and let $s_{m+1}=s'$. Then there is a constant $C=C_{{\bf q},{\boldsymbol s},r,n}$ such that
	\begin{equation*}
	\Vert [{\mathcal T}_{\Omega},{\bf b}](f_1,\cdots,f_m)\Vert_{L^{s}(\mu_{\bf v})}\leq C\Vert\Omega\Vert_{L^r}[{\bf v}]^{\max_{1\leq i\leq m}\left\{\frac{q_i}{s_i-q_i}\right\}}_{A_{\bf s,q}}\prod_{i=1}^{m}\Vert b_i\Vert_{BMO}^{\alpha_i}\prod^{m}_{i=1}\Vert f_i\Vert_{L^{s_i}(v_i)}.
	\end{equation*}
\end{corollary}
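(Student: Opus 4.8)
The plan is to derive Corollary~\ref{cor4} as a direct application of the extrapolation-type result for commutators recorded in Lemma~\ref{Com}, using Corollary~\ref{cor1} as the required hypothesis. In outline: Lemma~\ref{Com} says precisely that whenever an $m$-linear operator $T$ admits a weighted bound of the form $\|T(f_1,\ldots,f_m)\|_{L^s(w)}\lesssim\prod_{i=1}^m\|f_i\|_{L^{s_i}(w_i)}$ for \emph{all} $\vec{w}\in A_{\boldsymbol{s},\boldsymbol{q}}$ (with the compatibility relations $\boldsymbol{s}\prec\boldsymbol{q}$, $\frac1s=\sum\frac1{s_i}$, $w=\prod w_i^{s/s_i}$), then the commutator $[T,\boldsymbol{b}]_\alpha$ inherits an analogous weighted bound with the extra factor $\prod_i\|b_i\|_{\mathrm{BMO}}^{\alpha_i}$. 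So the main task is to verify that $T=\mathcal{T}_\Omega$ satisfies the hypothesis of Lemma~\ref{Com} with the specified parameters, and then to track the constant so that the quantitative factor $[\mathbf{v}]_{A_{\boldsymbol{s},\boldsymbol{q}}}^{\max_i\{q_i/(s_i-q_i)\}}$ survives.

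First I would observe that Corollary~\ref{cor1}, proved above via the sparse domination of Theorem~\ref{th1.1}, gives exactly the weighted estimate
\[
\|\mathcal{T}_\Omega(f_1,\ldots,f_m)\|_{L^s(\mu_{\mathbf{v}})}\leq C\,\|\Omega\|_{L^r}\,[\mathbf{v}]_{A_{\boldsymbol{s},\boldsymbol{q}}}^{\max_{1\leq i\leq m}\{q_i/(s_i-q_i)\}}\prod_{i=1}^m\|f_i\|_{L^{s_i}(v_i)}
\]
under the stated constraints ($\boldsymbol{s}\prec\boldsymbol{q}$, $s_{m+1}=s'$, $1<s<(mn+mnp'+cr)/(mnp')$, $\mu_{\mathbf{v}}=\prod v_i^{s/s_i}$), and that these are precisely the structural constraints appearing in the hypothesis of Lemma~\ref{Com} (with $\vec r=\boldsymbol{q}$, $\vec s=\boldsymbol{s}$). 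Thus for every $\mathbf{v}\in A_{\boldsymbol{s},\boldsymbol{q}}$ the hypothesis of Lemma~\ref{Com} holds for $T=\mathcal{T}_\Omega$. Applying Lemma~\ref{Com} then yields
\[
\|[\mathcal{T}_\Omega,\mathbf{b}]_\alpha(f_1,\ldots,f_m)\|_{L^s(\mu_{\mathbf{v}})}\lesssim\prod_{i=1}^m\|b_i\|_{\mathrm{BMO}}^{\alpha_i}\|f_i\|_{L^{s_i}(v_i)},
\]
which is the desired conclusion except for making the dependence on $\|\Omega\|_{L^r}$ and on $[\mathbf{v}]_{A_{\boldsymbol{s},\boldsymbol{q}}}$ explicit.

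The remaining step, and the one requiring the most care, is the quantitative bookkeeping. The abstract extrapolation for commutators (the Coifman--Rochberg--Weiss/Cruz-Uribe--Martell--P\'erez circle of ideas adapted to the multilinear $A_{\boldsymbol{p},\boldsymbol{r}}$ setting in \cite{li_extrapolation_2020}) produces the commutator bound by interpolating the operator bound over a family of weights $\mathbf{v}_t$ obtained by perturbing $\mathbf{v}$ along the $\mathrm{BMO}$ directions, i.e.\ $w_{i,t}=w_i e^{t\,\mathrm{Re}\,b_i}$ for small $|t|$; one differentiates (or integrates over a small complex disc) the estimate for $\mathcal{T}_\Omega$ applied to these weights. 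Since Corollary~\ref{cor1} gives the operator bound with an explicit power $[\mathbf{v}_t]_{A_{\boldsymbol{s},\boldsymbol{q}}}^{\theta}$, $\theta=\max_i\{q_i/(s_i-q_i)\}$, and since $[\mathbf{v}_t]_{A_{\boldsymbol{s},\boldsymbol{q}}}\lesssim 2^{\theta}[\mathbf{v}]_{A_{\boldsymbol{s},\boldsymbol{q}}}$ uniformly for $|t|$ below a threshold comparable to $([\mathbf{v}]_{A_{\boldsymbol{s},\boldsymbol{q}}}\prod_i\|b_i\|_{\mathrm{BMO}})^{-1}$ (a standard reverse-H\"older-type stability of $A_{\boldsymbol{s},\boldsymbol{q}}$ under such multiplicative perturbations), the Cauchy integral estimate over the disc of that radius transfers the factor $[\mathbf{v}]_{A_{\boldsymbol{s},\boldsymbol{q}}}^{\theta}$ together with $\|\Omega\|_{L^r}$ and the $\prod_i\|b_i\|_{\mathrm{BMO}}^{\alpha_i}$ coming from the $\alpha_i$-fold differentiation. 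I expect the main obstacle to be precisely this stability lemma for $A_{\boldsymbol{s},\boldsymbol{q}}$ weights under perturbation and the verification that the radius of the admissible disc is of the right size; everything else is a routine invocation of Corollary~\ref{cor1} and Lemma~\ref{Com}. One should also check the benign parameter compatibility $\boldsymbol{s}\prec\boldsymbol{q}$, $\frac1s=\sum\frac1{s_i}$, $s_{m+1}=s'$ and the range $1<s<(mn+mnp'+cr)/(mnp')$ carry over verbatim, which they do since Corollary~\ref{cor4} imposes exactly the hypotheses of Corollary~\ref{cor1}.
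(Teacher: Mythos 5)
Your proposal is correct and follows essentially the same route as the paper: the paper's entire proof of Corollary~\ref{cor4} is the one-line observation that it follows from Lemma~\ref{Com} applied with the hypothesis supplied by Corollary~\ref{cor1}, which is exactly your first two paragraphs. Your additional Cauchy-integral/Coifman--Rochberg--Weiss bookkeeping to make the dependence on $[\mathbf{v}]_{A_{\boldsymbol{s},\mathbf{q}}}$ and $\Vert\Omega\Vert_{L^r}$ explicit goes beyond what the paper records (it simply states the quantitative bound as "immediate"), and is a reasonable, though not fully carried out, way to justify that dependence.
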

\begin{proof}
	It immediately follows from Lemma \ref{Com} and Corollary \ref{cor1}.
\end{proof}

\vspace{0.5cm}

	\end{document}